\documentclass[12pt, a4, reqno]{amsart}

\usepackage{enumerate}
\usepackage{amsaddr}
\title{On signs of coefficients of $L$-functions}
\author{Didier Lesesvre}

\address{CNRS --- Université de Montréal CRM-CNRS \\Universit\'e de Lille -- Laboratoire Paul Painlev\'e, UMR 8524,
 59000 Lille, France\\
 e-mail: didier.lesesvre@univ-lille.fr}

\author{ Ming Ho Ng} 
 \address{Department of Mathematics, The Chinese University of Hong Kong\\ Shatin, Hong Kong, P.R. China\\e-mail: mhng@math.cuhk.edu.hk} 
 \author{Yingnan Wang}
 \address{School of Mathematical Sciences, Shenzhen University\\
Shenzhen, Guangdong 518060, P.R. China\\e-mail: ynwang@szu.edu.cn}
\date{\today}
\usepackage{fullpage}
\usepackage{amsmath, amssymb, amsthm}
\usepackage{xcolor}
\usepackage[colorlinks=true, linkcolor=blue]{hyperref}

\newcommand{\GSp}{\mathrm{GSp}}
\newcommand{\GL}{\mathrm{GL}}

\newcommand{\R}{\mathbb{R}}

\newtheorem{prop}{Proposition}
\newtheorem{thm}{Theorem}
\newtheorem{lem}{Lemma}

\newtheorem{coro}{Corollary}
\theoremstyle{remark}
\newtheorem{rk}{Remark}

\renewcommand{\leq}{\leqslant}
\renewcommand{\geq}{\geqslant}

\begin{document}

\begin{abstract}
We give a  general lower bound on the frequency of sign changes in the real coefficients of $L$-functions of the Selberg class. We in particular recover existing results in the cases of $\GL(2)$ and $\GL(3)$, and obtain new bounds in the case of $\GSp(4)$.
\end{abstract}

\maketitle

\section{Introduction}

$L$-functions are ubiquitous in number theory and carry critical information on objects of many natures: elliptic curves, Galois representations, automorphic forms, etc. see \cite{iwaniec_perspectives_2000} for a more in-depth survey. Whereas some results on $L$-functions depend on very specific settings where extra arithmetical or structural information is available (e.g. Hecke relations, trace formulas, functoriality, etc.), some arguments remain general and presenting them in such a generality helps understanding how properties are connected and motivates certain hypotheses. We address here, in this general setting, the question of establishing statistics on the sign changes on the coefficients attached to such $L$-functions, motivated by problems such as Linnik's problem in the case of Dirichlet $L$-functions and the recent work of Jääsaari \cite{jaasaari_signs_2022} in the case of $\GL(3)$.

\subsection{$L$-functions and the Selberg class} 
\label{subsec:$L$-functions}

We define in this section a general $L$-function in the Selberg class, referring to \cite[Chapter 5]{iwaniec_analytic_2004}. An $L$-function $L(s)$ is a complex analytic function on a right half-plane having the following properties: 
\begin{enumerate}[(i)]
\item \textbf{Dirichlet series:} It admits a series expansion of the form
\begin{equation}
\label{dirichlet-series}
L(s) = \sum_{m \geqslant 1} \frac{A(m)}{m^s}, \qquad \Re(s) > 1, 
\end{equation}
where $A(m) \in \mathbb{C}$ for all $m \geqslant 1$ and are called the coefficients of the $L$-function.
\item \textbf{Euler product:} We assume that $L(s)$ admits a Euler product 
\begin{equation}
\label{euler-product}
L(s) = \prod_p L_p(s), \qquad \Re(s) > 1, 
\end{equation}
where the product runs over primes $p$, and for every prime $p$ we let
\begin{equation}
L_p(s)  =\sum_{k\geqslant 0}\frac{A(p^k)}{p^{ks}}.
\end{equation}
\item \textbf{Functional equation:} There is an integer $d \geqslant 1$ (called the \textit{degree} of the $L$-function) and constants $\mu_j \in \mathbb{C}$ such that, letting 
\begin{equation}
    \gamma(s) = \prod_{j=1}^d \Gamma\left( \frac{s+\mu_j}{2}\right), 
\end{equation}
the $L$-function $L(s)$ extends meromorphically to the whole complex plane $\mathbb{C}$ and satisfies the functional equation
\begin{equation}
    L(s) = \varepsilon(L) \frac{\gamma(1-s)}{\gamma(s)} L(1-s)
\end{equation}
for a certain $\varepsilon(L) \in \mathbb{U}$.
\item \textbf{Subconvexity:}
We consider a (sub)convexity exponent $\theta \geqslant 0$ towards Lindelöf hypothesis, i.e. such that
\begin{equation}
\label{subconvexity-theta}
L(\tfrac12 + it) \ll (1+|t|)^{\theta + \varepsilon}, 
\end{equation}
which, by the Phragmén-Lindelöf principle, implies that in the critical strip $\sigma \in (0,1)$ we have 
\begin{equation}\label{convexity bound}
L(\sigma+it) \ll (1+|t|)^{2\theta(1-\sigma) + \varepsilon}.
\end{equation}
Unconditionally for any $L$-function satisfying mild  bounds on coefficients (viz. $A(m) \ll m^{1/2-\delta}$ for a certain $\delta >0$, which is typically the case for $\GL(n)$ $L$-functions) we know the convexity bound $\theta = d/4$ where the integer $d$ is the {degree} of the $L$-function. The Lindelöf hypothesis postulates that $\theta = 0$; and some explicit exponent beyond convexity are established in specific cases of low rank, and recently for all $\GL(n)$ by Nelson \cite{nelson_bounds_2023}.
\item \textbf{Rankin-Selberg bounds:}
We assume the Rankin-Selberg bound, i.e., for all $X \geqslant 1$, 
\begin{equation}\label{R-S bound}
\sum_{m \leq X} |A(m)|^2 \ll X^{1+\varepsilon}
\end{equation}
which corresponds to the non-existence of poles to the corresponding Rankin-Selberg $L$-function on $\Re(s)>1$, and of classical zero-free regions around $\Re(s) = 1$. As an application, we easily obtain that
\begin{equation}\label{bound for coefficients}
A(m) \ll m^{1/2+\varepsilon}.
\end{equation}
\item \textbf{Lower bound on non-vanishing of coefficients:}
We require lower bounds on sums of coefficients of the form
\begin{equation}
\label{lower-bound-coeff}
\sum_{m \sim X} |A(m)| \gg X^{\kappa  - \varepsilon}
\end{equation}
where $\kappa  > 0$. By \eqref{R-S bound} and Cauchy's inequality, we have $\kappa\leq 1$. If we apply the Rankin-Selberg theory and a bound toward the Generalized Ramanujan Conjecture $A(m)\ll m^{\vartheta} $, we have that $\kappa=1-\vartheta$ is admissible. If we assume the generalized Sato-Tate conjecture, we can take $\kappa=1-c \log \log (X)/\log (X)$ for $X$ sufficiently large, in particular $\kappa \geqslant 1-\varepsilon$ for all $\varepsilon >0$ when $X$ grows to infinity.
\end{enumerate}	

\subsection{Signs of coefficients}

We assume that the coefficients $A(m)$ are real, which is, for instance implied by a self-contragredience property of the underlying object. Assuming that $L(s)$ has no pole at $s=1$ (this excludes $\zeta(s)$, and it is the only case for $L$-functions that are primitive in the sense of Selberg), Landau's argument (see for instance \cite{pitale_sign_2008}) implies that the $A(m)$ changes sign infinitely often, it is therefore relevant to understand this behaviour in a finer way.

Some results have been explored in this direction in recent years, in the case of $\GL(2)$ (see \cite{matomaki}), but recently also $\GL(3)$ (see \cite{jaasaari_signs_2022}) or $\GL(n)$ (see \cite{lau}). We address here similar questions in the case of the Selberg class, and refine the arguments from \cite{jaasaari_signs_2022} to show their robustness in this very general setting.
The main result of this paper is the following.

\begin{thm}
\label{thm}
Consider an $L$-function with real coefficients $(A(m))_{m\geqslant 1}$  in the Selberg class as described above --- in particular a bound towards Lindelöf hypothesis with exponent $\theta$, a bound towards non-vanishing with exponent $\kappa $ and a Rankin-Selberg estimate. We assume that 
\begin{equation}
    \begin{cases}
        \kappa\geq1-\left({2(7\theta+3+\sqrt{(7\theta+3)^2+2})^2}\right)^{-1} & \text{if } \theta \geqslant 1/2, \\
        \kappa \geqslant -\frac{17}{2}+3\sqrt{10} &  \text{if } \theta \leqslant 1/2.
    \end{cases}
\end{equation}
Then the number of sign changes in $(A(m))_{m\leqslant X}$ is at least
\begin{equation}
    \begin{cases}
        X^{2\kappa - 2 + 1/2\theta-\delta- \frac{6\delta + 2\delta^2}{2\theta}} & \text{if $\theta \geqslant 1/2$}, \\
        X^{2\kappa - 1 - 6\delta} & \text{if $\theta \leqslant 1/2, $}
    \end{cases}
\end{equation}
for  $\delta = \sqrt{2-2\kappa+\varepsilon}$.
\end{thm}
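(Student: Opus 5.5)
We follow the strategy of Jääsaari for $\GL(3)$, adapted to the axioms (i)--(vi). The idea is to compare two estimates for sums of $A(m)$ over short intervals $[x,x+h]$ with $x\sim X$. The first is an \emph{upper} bound for $\sum_{x<m\leq x+h}A(m)$, valid for most $x$, coming from the analytic properties. The second is a \emph{lower} bound for $\sum_{x<m\le x+h}|A(m)|$, valid for many $x$, coming from \eqref{lower-bound-coeff} together with \eqref{R-S bound}. On any interval free of sign changes these two quantities coincide. Hence if the lower bound exceeds the upper bound on many disjoint intervals, each such interval contains a sign change, and counting these intervals gives the result.

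\textbf{Step 1 (short sums via Perron).} Start from a truncated Perron formula for $\sum_{m\le x}A(m)$ at height $T$. Move the contour to $\Re s=1/2$; there is no pole at $s=1$. This writes $\sum_{x<m\le x+h}A(m)$ as
\[
\frac{1}{2\pi i}\int_{1/2-iT}^{1/2+iT}L(s)\,\frac{(x+h)^s-x^s}{s}\,ds
\]
plus truncation errors. The horizontal segments are controlled by \eqref{convexity bound}, and the Perron tails by \eqref{bound for coefficients} and \eqref{R-S bound}. Rather than bounding this pointwise, take the mean square over $x\in[X,2X]$. Expanding the square and integrating in $x$ gives a Plancherel-type bound
\[
\int_X^{2X}\Bigl|\sum_{x<m\le x+h}A(m)\Bigr|^2dx \ll X^{2+\varepsilon}\int_{|t|\le T}|L(\tfrac12+it)|^2\min\Bigl(\frac{h^2}{X^2},\frac1{t^2}\Bigr)dt + (\text{error}).
\]
The $L$-function is then bounded using \eqref{subconvexity-theta}. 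This produces a mean-square bound of shape $X h^{2\theta}$ or $X h$, depending on whether $2\theta\ge1$. This dichotomy is the source of the case split at $\theta=1/2$.

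\textbf{Step 2 (many good short intervals).} Split $[X,2X]$ into $X/h$ intervals of length $h$. Call an interval \emph{bad} if its signed sum exceeds a threshold $X^{\eta}$ with $\eta$ to be chosen. By Chebyshev's inequality applied to the mean square, the number of bad intervals is at most (mean square)$/(hX^{2\eta})$.

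For the lower bound, write $\sum_{m\sim X}|A(m)|\gg X^{\kappa-\varepsilon}$. Combining this with Cauchy--Schwarz and \eqref{R-S bound} shows that the intervals with $\sum_I|A(m)|\ge X^{\kappa-1-\delta}h$ must be numerous. An interval with $\sum_I|A|$ much smaller than its mass would contribute too little. Quantitatively, Cauchy--Schwarz on each interval bounds the contribution of intervals with large $\sum_I |A|^2$ using \eqref{R-S bound}. This forces at least about $X^{2\kappa-1-O(\delta)}/h$ intervals with $\sum_I|A(m)|\gg h X^{\kappa-1-\delta}$. The loss $\delta=\sqrt{2-2\kappa+\varepsilon}$ arises from optimizing the level sets in this dyadic pigeonholing.

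\textbf{Step 3 (optimization).} Take $\eta$ slightly below $\kappa-1-\delta+\log h/\log X$, so that an interval which is good for both criteria must contain a sign change. The number of sign changes is then at least the number of intervals that are large in absolute value minus the number of bad intervals. Choose $h=X^{\alpha}$ so that the subtracted term is negligible. For $\theta\ge1/2$ the mean square is about $Xh^{2\theta}$, and balancing gives $\alpha\approx(1-\dots)/(2\theta)$; the resulting count has the stated exponent $2\kappa-2+1/(2\theta)-\delta-(6\delta+2\delta^2)/(2\theta)$. For $\theta\le1/2$ the mean square is about $Xh$, and one obtains $X^{2\kappa-1-6\delta}$. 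The hypotheses on $\kappa$ are exactly the conditions that make the exponent positive and the admissible $\alpha$ range nonempty. The two numerical constants are roots of the resulting quadratic inequalities in $\delta$, using $\delta^2=2-2\kappa$.

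\textbf{Main obstacle.} The delicate step is Step 2: obtaining many intervals with large $\sum|A|$ with only a power-of-$\delta$ loss, using nothing but \eqref{lower-bound-coeff} and \eqref{R-S bound}. No Hecke multiplicativity is available here, unlike in the $\GL(3)$ argument. I would try a two-parameter dyadic decomposition by the size of $\sum_I|A|$ and $\sum_I|A|^2$ and then optimize. A secondary difficulty is keeping the Perron truncation errors in Step 1 below the main terms, which is where the convexity bound \eqref{convexity bound} off the critical line enters.
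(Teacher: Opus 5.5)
There is a genuine gap, and it is in Step 1 rather than Step 2. A plain Perron/Plancherel treatment of $\sum_{x<m\le x+h}A(m)$, combined with the pointwise bound $|L(\tfrac12+it)|\ll(1+|t|)^{\theta+\varepsilon}$, does not give a mean square of shape $Xh^{2\theta}$ or $Xh$. Already the range $|t|\le X/h$ contributes $h^2\int_0^{X/h}t^{2\theta}\,dt\asymp X^{1+2\theta}h^{1-2\theta}$. By Chebyshev, for most $x$ the short sum is then only $\ll X^{\theta+\varepsilon}h^{1/2-\theta}$. This lies below $hX^{\kappa-1}$ only when $h\gg X^{(2\theta+2-2\kappa)/(1+2\theta)}$. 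Even for $\kappa=1$ you get about $X^{1/(1+2\theta)}$ sign changes. That is strictly fewer than the claimed $X^{1/(2\theta)-O(\delta)}$ for $\theta\ge1/2$ and $X^{1-6\delta}$ for $\theta\le1/2$, once $\theta>0$ and $\delta$ is small. The exponents $(6\delta+2\delta^2)/(2\theta)$ and $6\delta$, which you say come out of the balancing in Step 3, are not produced by anything in your argument. They come from the specific shape of the paper's upper bound.

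The missing idea is the Matom\"aki--Radziwi\l\l-type factorization the paper uses, following J\"a\"asaari.
\begin{itemize}
\item One restricts to $n=mk\in[x,x+H]$ with $m\sim M=X^{\delta}$ and $(m,k)=1$.
\item Multiplicativity is available here, contrary to your remark, because it follows from the Euler product axiom (ii). So $A(mk)=A(m)A(k)$.
\item After detecting coprimality by M\"obius and removing the congruences (Lemmas \ref{lem:d} and \ref{lem:cs}), the Dirichlet polynomial factors as $M(s)K(s)$.
\item The $t$-range is then split according to whether $|M(\tfrac12+it)|\le M^{1/2-\delta}$. On that set the mean value theorem applied to $K$ saves $M^{-2\delta}=X^{-2\delta^2}$.
\item The complementary set has measure $\ll X^{4\delta}$ by a $2\ell$-th moment bound. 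On it one only needs the pointwise bound $K(\tfrac12+it)\ll T^{\theta+\varepsilon}+(X/MT)^{1/2+\varepsilon}$ of Lemma \ref{lem:subconvexity-partial-sums}.
\end{itemize}
This replaces the length $X/H$ of the $t$-range by $X^{O(\delta)}$. It yields $\int_X^{2X}|\Sigma|^2\,dx\ll H^2X^{1-\delta^2}+X^{2\theta(1+\delta)+6\delta}H^{2-2\theta}$, hence $|\Sigma|\ll HX^{-\delta^2/2}$ for most $x$.

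So the $\delta$ in the statement is the exponent of the auxiliary factor $M$. The condition $\delta>\sqrt{2-2\kappa}$ is exactly the requirement $X^{-\delta^2/2}<X^{\kappa-1}$; it is not a loss from dyadic pigeonholing. The lower bound, which you flag as the main obstacle, is the easy part. Truncate at $|A(km)|\le X^{\alpha}$ with $\alpha=1-\kappa+2\varepsilon$, use Rankin--Selberg to control the large values, and average over $x$. This gives $\sum|A(km)|>HX^{\kappa-1-2\varepsilon}$ on a set of $x$ of measure $\gg X^{2\kappa-1-3\varepsilon}$, with no $\delta$-loss.
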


\begin{rk}
    Theorem \ref{thm} in particular includes the $\GL(2)$ case treated in \cite{matomaki} and the $\GL(3)$ case treated in \cite {jaasaari_signs_2022}. It improves upon the $\GL(n)$ case from \cite{lau} provided there are good enough bounds on $\theta$ and $\kappa$.
\end{rk}

\begin{rk}
Conjecturally, $\kappa = 1$, in which case the number of sign changes in $(A(m))_{m\leqslant X}$ is
\begin{equation}
    \begin{cases}
        X^{1/2\theta-\delta} & \text{if $\theta \geqslant 1/2$}, \\
        X^{1- \delta} & \text{if $\theta \leqslant 1/2, $}
    \end{cases}
\end{equation}
for all $\delta > 0$. Moreover, Lindelöf hypothesis postulates that $\theta = 0$, in which case there should always be $X^{1- \delta}$ sign changes for all $\delta >0$ (but Theorem \ref{thm} proves this is the case as soon as $\theta \leqslant 1/2$).
The conditions tying $\theta$ and $\kappa$ allow for a certain leeway even without having the best expected results towards the Lindelöf hypothesis or the non-vanishing of coefficients. If $\theta$ is very large, $\kappa$ is required to be very close to $1$; the convexity bound $\theta  = d/4$ already allows for $\kappa \geqslant 1-\eta$ for a certain $\eta>0$. When $\theta = 1/2$, any $\kappa \geqslant 0.9971$ is admissible. When $\theta < 1/2$, any $\kappa > 0.9869$ is admissible. 
\end{rk}

\begin{rk}
    Besides refining the arguments in \cite{jaasaari_signs_2022} to adapt them to the Selberg class, we also give detailed proofs of the (sub)convexity bound for partial sums (see Lemma \ref{lem:subconvexity-partial-sums}) and the removal of congruences (see Lemma \ref{lem:cs}) in general settings. These may be interesting to some readers.
\end{rk}
We can deduce from the general statement in Theorem \ref{thm} a new result for the $\mathrm{GSp}(4)$ case:

\begin{coro}
\label{coro}
Let $f$ be a Siegel modular form of weight $k$. Assume that $f$ is a Hecke eigenform, which is not a Saito–Kurokawa lift. Let $(A(m))_m$ be the coefficients of the associated spinor $L$-function $L(s, f, \mathrm{Spin})$, and let $\theta$ be the best exponent towards the Lindelöf hypothesis. Then, the number of sign changes in $(A(m))_{m\leqslant X}$ is $\gg X^{1/2\theta + \varepsilon}$.
\end{coro}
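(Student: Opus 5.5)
We deduce the corollary by applying Theorem \ref{thm} to $L(s,f,\mathrm{Spin})$, normalized so that its critical line is $\Re(s)=1/2$. The first step is to check that this $L$-function belongs to the class of \S\ref{subsec:$L$-functions} with real coefficients.

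\begin{enumerate}[(i)]
\item Andrianov's theory gives the Euler product of degree $4$ and the meromorphic continuation with a functional equation of the required shape. The gamma factor is $\Gamma_\mathbb{C}(s+\tfrac12)\Gamma_\mathbb{C}(s+k-\tfrac32)$, which is a product of four $\Gamma((s+\mu_j)/2)$ by the duplication formula.
\item Since $f$ is not a Saito--Kurokawa lift, $L(s,f,\mathrm{Spin})$ is entire. In particular it has no pole at $s=1$, so sign changes are meaningful.
\item The Hecke operators are self-adjoint for the Petersson product, so the eigenvalues, and hence the $A(m)$, are real.
\end{enumerate}

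The key arithmetic input is Weissauer's proof of the Ramanujan conjecture for non-Saito--Kurokawa Hecke eigenforms of $\GSp(4)$. It gives $|A(p^k)|\leq (k+1)^3$, hence $A(m)\ll m^{\varepsilon}$. Combined with the Rankin--Selberg theory for the spinor $L$-function, which is available through the functorial lift to $\GL(4)$ of Pitale--Saha--Schmidt, we get the asymptotic $\sum_{m\leq X}|A(m)|^2\sim cX$ with $c>0$. This yields \eqref{R-S bound}. It also yields the non-vanishing input \eqref{lower-bound-coeff} via
\begin{equation*}
\sum_{m\sim X}|A(m)|\geq \Big(\max_{m\sim X}|A(m)|\Big)^{-1}\sum_{m\sim X}|A(m)|^2\gg X^{1-\varepsilon}.
\end{equation*}
So $\kappa=1-\varepsilon$ is admissible for every $\varepsilon>0$, exactly the situation $\kappa=1-\vartheta$ with $\vartheta=\varepsilon$ mentioned after \eqref{lower-bound-coeff}. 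For $\theta$, convexity gives $\theta=d/4=1$, and we let $\theta$ denote the best available exponent.

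Next we verify the hypothesis of Theorem \ref{thm}. In both cases the required lower bound on $\kappa$ is a fixed quantity strictly less than $1$, depending only on $\theta$. Taking $\varepsilon$ small enough therefore makes $\kappa=1-\varepsilon$ admissible. Then $\delta=\sqrt{2-2\kappa+\varepsilon}=\sqrt{3\varepsilon}$ tends to $0$ with $\varepsilon$. In the case $\theta\geq 1/2$, the exponent in Theorem \ref{thm} is
\begin{equation*}
2\kappa-2+\frac1{2\theta}-\delta-\frac{6\delta+2\delta^2}{2\theta}=\frac1{2\theta}-O(\sqrt{\varepsilon}).
\end{equation*}
Renaming $\varepsilon$, this gives $\gg X^{1/2\theta-\varepsilon}$ sign changes; the "$+\varepsilon$" in the statement should be read in this sense. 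If instead $\theta<1/2$, the theorem gives $X^{1-\varepsilon}$ directly, which is the trivial maximal order. In that range $1/2\theta>1$, so the statement as written is only meaningful for $\theta\geq1/2$, which covers every exponent currently known.

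The main obstacle is not the deduction itself but justifying the inputs in the required generality. The crucial point is the Ramanujan bound from Weissauer, which is exactly where the non-Saito--Kurokawa hypothesis enters: for SK lifts the coefficients are not $\ll m^{\varepsilon}$ and the $L$-function has a pole. The second point is the Rankin--Selberg asymptotic with positive main term, which requires knowing that $L(s,f\times f,\ldots)$ has a simple pole at $s=1$; I would import this from the transfer to $\GL(4)$ rather than reprove it.
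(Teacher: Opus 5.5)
Your proposal is correct and follows the paper's own route: Weissauer's Ramanujan bound for non-Saito--Kurokawa eigenforms, together with Rankin--Selberg theory, makes $\kappa=1$ (up to $\varepsilon$) admissible as in the remark after \eqref{lower-bound-coeff}, and Theorem~\ref{thm} then gives $X^{1/2\theta-O(\sqrt{\varepsilon})}$ sign changes. Your additional points make precise what the paper states loosely: the bound $|A(p^k)|\leq (k+1)^3$, the Rankin--Selberg lower bound via the $\GL(4)$ transfer, and the observations that the exponent should read $1/2\theta-\varepsilon$ and is only meaningful for $\theta\geq 1/2$.
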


\begin{rk}
    
The case of Saito–Kurokawa lifts is well-understood: they correspond to those modular forms such that $A(m)>0$ for all $m \geqslant 1$, as proven by Pitale-Schmidt \cite{pitale_weissauer}; there is therefore no sign change in this case.
\end{rk}

The strategy of the proof is to detect sign changes in short intervals, by comparing 
\begin{equation}
S_1(X) := \Bigg| \sum_{\substack{mk \in [x, x+H] \\ m \sim M \\ (m,k)=1}} A(m) \Bigg| \qquad \text{and} \qquad S_2(X) := \sum_{\substack{mk \in [x, x+H] \\ m \sim M \\ (m,k)=1}} |A(m)| 
\end{equation}
for a certain $H>0$ and to prove that $S_1(X) < S_2(X)$ for large enough $X$, therefore detecting a sign change in the interval $[X, X+H]$. This would ensure at least $X/H$ sign changes in $[1,X]$; in particular, the smaller $H$, the better the result. We will prove the inequality by establishing an upper bound for (the second moment of) $S_1(X)$ (in Section \ref{sec:upper-bound}) and a lower bound for $S_2(X)$ (in Section \ref{sec:lower-bound}), to conclude the argument in Section \ref{sec:conclusion}. A novel application is given in Section \ref{sec:application}.

%
%
%
%

\section{Upper bound}
\label{sec:upper-bound}

We aim at proving the following second moment upper bound, following the strategy in~\cite{jaasaari_signs_2022}. 
\begin{prop}
Let $X^{6\delta} \ll H \ll X^{1-6\delta}$ and $M=X^\delta$, where $0< \delta <1$ is a positive constant. We have
\begin{equation}
\label{second-moment}
\int_X^{2X} \Bigg| \sum_{\substack{mk \in [x, x+H] \\ m \sim M \\ (m,k)=1}} A(mk) \Bigg|^2 dx \ll H^2X^{1-\delta^2}+X^{2\theta(1+\delta)+6\delta}H^{2-2\theta}.
\end{equation}
\end{prop}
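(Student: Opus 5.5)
The plan is to reduce the short-interval second moment to a mean value of a Dirichlet polynomial, in the style of Matomäki--Radziwiłł as adapted by Jääsaari \cite{jaasaari_signs_2022}. Since $(m,k)=1$, the Euler product \eqref{euler-product} gives $A(mk)=A(m)A(k)$. Hence the inner sum is $\sum_{x<n\leq x+H} a(n)$, where $a(n)=\sum_{mk=n,\ m\sim M,\ (m,k)=1}A(m)A(k)$. Its generating Dirichlet series is, up to the coprimality condition, the product
\begin{equation*}
F(s)=\mathcal{M}(s)\,\mathcal{K}(s),\qquad \mathcal{M}(s)=\sum_{m\sim M}\frac{A(m)}{m^{s}},\qquad \mathcal{K}(s)=\sum_{k\asymp X/M}\frac{A(k)}{k^{s}}.
\end{equation*}
The condition $(m,k)=1$ will be removed first, using the congruence-removal Lemma \ref{lem:cs}. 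I will write the coprimality through Möbius inversion over common divisors $d\mid (m,k)$. The terms with $d$ large are then bounded trivially by \eqref{R-S bound} and Cauchy--Schwarz, and the terms with $d$ small are twisted copies of $\mathcal{M}\mathcal{K}$ with the same analytic behaviour. This step costs at most $X^{O(\delta)}$, which is one source of the $X^{6\delta}$ losses.

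Next I apply the standard Parseval/Perron lemma for short sums. Writing $\sum_{x<n\leq x+H}a(n)=\frac{1}{2\pi i}\int F(s)\frac{(x+H)^s-x^s}{s}\,ds$ on $\Re s=1$, it gives
\begin{equation*}
\frac{1}{X}\int_X^{2X}\Big|\sum_{x<n\leq x+H}a(n)\Big|^2dx\ll \Big(\frac{H}{X}\Big)^2\int_{|t|\leq X/H}|F(1+it)|^2dt+\max_{T> X/H}\frac{X}{HT}\Big(\frac{H}{X}\Big)^2\int_{T}^{2T}|F(1+it)|^2dt .
\end{equation*}
I then split the $t$-integral at a height $T_0=X^{c\delta}$ (or at $X/H$), to be chosen at the end, and treat the two ranges separately.

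In the low range $|t|\leq T_0$, I use the mean value theorem for Dirichlet polynomials on $\mathcal{K}$. This gives $\int_{|t|\leq T}|\mathcal{K}(1+it)|^2dt\ll (T+X/M)(X/M)^{-1+\varepsilon}$ by \eqref{R-S bound}. For $\mathcal{M}$ I aim for a pointwise saving $\sup_{|t|\leq T_0}|\mathcal{M}(1+it)|\ll M^{-\delta}=X^{-\delta^2}$. To get it, I will express $\mathcal{M}(1+it)$ as a contour integral of $L(1+w+it)M^{w}$ and shift to $\Re w=-\delta$ or so, where $L$ is polynomially bounded by \eqref{convexity bound} and has no pole since $L$ is entire. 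Multiplying by the prefactor $(H/X)^2\cdot X$ produces the term $H^2X^{1-\delta^2}$.

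In the high range $|t|\geq T_0$, I use the (sub)convexity bound for partial sums, Lemma \ref{lem:subconvexity-partial-sums}. It should give $\sum_{k\leq K}A(k)k^{-it}\ll K^{1/2}|t|^{\theta+\varepsilon}$, obtained by Perron's formula and a contour shift to $\Re s=\tfrac12$ using \eqref{subconvexity-theta}. By partial summation this yields $|\mathcal{K}(1+it)|\ll (X/M)^{-1/2}|t|^{\theta+\varepsilon}$. For $\mathcal{M}$ I apply the mean value theorem, $\int_T^{2T}|\mathcal{M}(1+it)|^2\ll (T+M)M^{-1+\varepsilon}$. Inserting these into both Parseval terms gives contributions of size $H^2X^{-1}\cdot M\cdot (X/H)^{2\theta}\cdot X^{O(\delta)}$. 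After accounting for $M=X^{\delta}$, the coprimality losses and the ranges $X^{6\delta}\ll H\ll X^{1-6\delta}$, this should be bounded by $X^{2\theta(1+\delta)+6\delta}H^{2-2\theta}$.

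I expect the main obstacle to be the high range, and specifically the uniformity of the partial-sum bound for $\mathcal{K}$. It is needed for $T$ up to arbitrarily large heights in the $\max_{T>X/H}$ term, with sharp cutoffs at $k\asymp X/M$, and it must hold uniformly in the twisted versions created by the coprimality removal. For very large $T$ I would not use the pointwise bound but instead the mean value theorem for $F$ as a whole. The factor $X/(HT)$ then makes that range harmless, and the pointwise subconvexity input is only needed for $T$ near $X/H$. That is where the exponent $2\theta$ on $X/H$ arises.
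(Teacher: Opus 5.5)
\textbf{There is a genuine gap, in your high-range step.}

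On $T_0\le|t|\le X/H$ you pair the pointwise bound for the long polynomial $\mathcal{K}$ with the mean value theorem for the short polynomial $\mathcal{M}$. But $X/H\gg X^{6\delta}$ is much larger than the length $M=X^{\delta}$ of $\mathcal{M}$. So your own bound $\int_T^{2T}|\mathcal{M}(1+it)|^2dt\ll (T+M)M^{-1+\varepsilon}$ is of size $(T/M)M^{\varepsilon}$ at $T\asymp X/H$, not $M^{\varepsilon}$; the shortness of $\mathcal{M}$ buys nothing there.

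Use $|\mathcal{K}(1+it)|^2\ll (M/X)|t|^{2\theta+\varepsilon}$ and the correct Parseval weight $XH^2$. (When $F$ is normalised on $\Re s=1$ the prefactor is $H^2$, not $(H/X)^2$.) Then the range $|t|\asymp X/H$ contributes, up to $X^{\varepsilon}$,
$XH^2\cdot (M/X)(X/H)^{2\theta}\cdot X/(HM)=X^{1+2\theta}H^{1-2\theta}$.
This exceeds the target $X^{2\theta(1+\delta)+6\delta}H^{2-2\theta}$ by a factor of about $X^{1-(6+2\theta)\delta}/H$. It is therefore useless unless $H$ is close to $X^{1-6\delta}$; for $H=X^{6\delta}$ and small $\delta$ it is even worse than the trivial bound $XH^2$. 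Your claimed size $H^2X^{-1}M(X/H)^{2\theta}$ silently drops this factor $T/M$.

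The same loss occurs in the max term for $X/H<T\le X^{1+\delta^2}/H$. There the pointwise route gives $XHT^{2\theta}$, and the mean value theorem for $F$ as a whole gives $XH(T+X)/T$, which is about $XH^2$ at $T=X/H$. Nor can you raise the cut $T_0$ towards $X/H$. Your contour shift gives $|\mathcal{M}(1+it)|\ll M^{-\delta}|t|^{2\theta\delta+\varepsilon}$, which stops being a saving once $|t|\ge X^{\delta/(2\theta)}$.

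\textbf{The missing idea is the Matom\"aki--Radziwi\l\l{} dichotomy on which the paper's proof rests.}

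For $|t|\le X/H$, and again for $X/H\le|t|\le MX/H$, the paper does not cut according to the size of $t$. It cuts according to the size of the short factor, setting $\mathcal{T}=\{t:\ |M(\tfrac12+it)|\le M^{1/2-\delta}\}$.

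On $\mathcal{T}$ one takes the saving from $M$ pointwise and applies the mean value theorem to the \emph{long} factor $K$. Since $K$ has length $X/M\ge X/H$, this is essentially lossless and gives $X^{1-2\delta^2+\varepsilon}$.

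The complement $\mathcal{U}$ has measure $\ll X^{4\delta}$. This follows from the mean value theorem applied to $M(s)^{\ell}$ with $\ell=[1/\delta]+1$, so that $M^{\ell}\ge X/H$. Only on $\mathcal{U}$ is the pointwise bound of Lemma~\ref{lem:subconvexity-partial-sums} for $K$ invoked. That is exactly where the term $H^2X^{6\delta}((MX/H)^{2\theta}+H/M)$ comes from.

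For $|t|\ge MX/H$ the paper uses the trivial bound $|M(\tfrac12+it)|^2\ll M^{1+\varepsilon}$ together with the mean value theorem for $K$ on dyadic ranges.

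You need this large-values step, which bounds the measure of the set where the short polynomial is large, to reach the exponent $2-2\theta$ on $H$.
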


The whole section is dedicated to its proof. 

\subsection{Preliminary lemmas}

 Introduce, for $\Re(s)>1$, 
 \begin{align*}
 M(s) & = \sum_{m \sim M} \frac{A(m)}{m^s}, \\
 K(s) & = \sum_{X/3M \leqslant k \leqslant 3X/M} \frac{A(k)}{k^s}.
 \end{align*}
By \eqref{R-S bound} and using the Cauchy-Schwarz inequality, we have that for $\Re s=\sigma\geq1/2$,
\begin{equation}\label{bound for M(s)}
    M(s)\ll \left( \sum_{m \sim M} \frac{|A(m)|^2}{m}\right)^{1/2}\left( \sum_{m \sim M} {m^{1-2\sigma}}\right)^{1/2}\ll M^{1-\sigma+\varepsilon}.
\end{equation}

Analogously to \eqref{bound for M(s)}, we have $K(s)\ll (X/M)^{1-\sigma+\varepsilon}$. A key point is to get a finer "subconvexity" bound for the partial sum $K(s)$. 

\begin{lem}[Subconvexity bound for partial sums]
\label{lem:subconvexity-partial-sums} 
For $s=1/2+it$ with $|t|\leq T$, we have
$$
K(s) \ll T^{\theta+\varepsilon} +(X/MT)^{1/2+\varepsilon},
$$
where $\theta$ is a subconvexity exponent as defined in \eqref{convexity bound}.
\end{lem}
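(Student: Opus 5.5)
Write $K(s)=\sum_{k\le 3X/M} A(k)k^{-s}-\sum_{k< X/3M}A(k)k^{-s}$, so it suffices to bound a single partial sum $\sum_{k\le Y}A(k)k^{-s}$ with $Y\asymp X/M$ and $s=\tfrac12+it$, $|t|\le T$. The plan is the standard truncated Perron argument, shifting the contour to the critical line so that the subconvexity exponent $\theta$ from \eqref{convexity bound} can be used.

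\textbf{Perron representation.} With $c=1/2+1/\log Y$ (so that $s+w$ has real part $1+1/\log Y$) and a parameter $U$ to be chosen, write
\[
\sum_{k\le Y}\frac{A(k)}{k^{s}}=\frac{1}{2\pi i}\int_{c-iU}^{c+iU}L(s+w)\frac{Y^{w}}{w}\,dw+O\Bigl(\sum_{k}\frac{|A(k)|}{k^{1/2}}\Bigl(\frac{Y}{k}\Bigr)^{c}\min\Bigl(1,\frac{1}{U|\log(Y/k)|}\Bigr)\Bigr).
\]
The error term is handled by splitting into $k$ far from $Y$ and $k$ within $Y/U$ of $Y$, using \eqref{R-S bound} (equivalently \eqref{bound for coefficients} on average). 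This gives $O(Y^{1/2+\varepsilon}/U)$; the short range near $Y$ contributes $\ll (Y/U)\cdot Y^{-1/2+\varepsilon}$, which is of the same size.

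\textbf{Contour shift.} Move the line of integration to $\Re w=\varepsilon$. Since $L$ has no pole at $s=1$ (assumed in the sign-change setting), the only singularity crossed is a possible pole of $1/w$ at $w=0$. But $\Re w$ stays positive throughout, so none is crossed. Bound the pieces as follows.
\begin{itemize}
\item The new vertical segment uses \eqref{subconvexity-theta}:
\[
\int_{-U}^{U}\bigl|L(\tfrac12+\varepsilon+i(t+v))\bigr|\frac{Y^{\varepsilon}}{1+|v|}\,dv\ll (T+U)^{\theta+\varepsilon}Y^{\varepsilon}.
\]
\item The horizontal segments at height $\pm U$ use \eqref{convexity bound} along $\Re(s+w)=\sigma\in[1/2,1]$. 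They are bounded by
\[
\max_\sigma (T+U)^{2\theta(1-\sigma)+\varepsilon}\,Y^{\sigma-1/2}/U\ll \bigl((T+U)^{\theta}+Y^{1/2}\bigr)(T+U)^{\varepsilon}/U .
\]
\end{itemize}

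\textbf{Choice of $U$.} Choose $U=T$. The total is then $\ll T^{\theta+\varepsilon}Y^{\varepsilon}+Y^{1/2+\varepsilon}/T$. Since $Y\asymp X/M$ and $1/T\le T^{-1/2}$, the second term is at most $(X/MT)^{1/2+\varepsilon}$ times a factor that is fine once we note $Y^{1/2}/T\le (Y/T)^{1/2}$ for $T\ge1$. The $Y^\varepsilon$ loss is absorbed by renaming $\varepsilon$ (as $T\le X$ in applications), and $T\ll1$ is trivial by \eqref{bound for M(s)}-type bounds. Applying this to both endpoints gives the lemma.

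\textbf{Main obstacle.} The main difficulty is the careful truncation error of Perron's formula near $k\approx Y$, where only mean-square information \eqref{R-S bound} is available rather than pointwise Ramanujan bounds. I would handle this by dyadic decomposition in $|k-Y|$ combined with Cauchy–Schwarz. If that step loses too much, I would fall back on a smoothed cutoff at $Y$. Its tail of length $Y/T$ is then estimated trivially via Cauchy–Schwarz and \eqref{R-S bound} as $\ll (Y/T)^{1/2}Y^{\varepsilon}$, which is exactly the shape of the second term in the statement.
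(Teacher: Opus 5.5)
Your proposal follows the paper's proof: truncated Perron at height $T$ on a line just to the right of $\Re(s+w)=1$, a shift to $\Re w=\varepsilon$, convexity on the horizontal segments, the exponent $\theta$ on the new vertical segment, and $U=T$. You also justify that no pole is crossed, which the paper does not. The conclusion is right, but two intermediate details need correcting.

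\textbf{The truncation error.} It is not $O(Y^{1/2+\varepsilon}/U)$ under the stated hypotheses. Bounding the range $|k-Y|\le Y/U$ by $(Y/U)Y^{-1/2+\varepsilon}$ presupposes $|A(k)|\ll k^{\varepsilon}$ on average over short intervals. The Rankin--Selberg mean square does not give this. Cauchy--Schwarz against $\sum_{k\le 2Y}|A(k)|^2\ll Y^{1+\varepsilon}$ gives only $(Y/U)^{1/2}Y^{\varepsilon}$ for that range. The dyadic ranges $|k-Y|\sim D$ with $Y/U<D\le Y/2$ contribute $\ll Y^{1+\varepsilon}U^{-1}D^{-1/2}$, which is again dominated by $(Y/U)^{1/2}Y^{\varepsilon}$. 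So the error is $\ll Y^{\varepsilon}\bigl(1+(Y/T)^{1/2}\bigr)$. This is what the paper obtains, and it is exactly the second term of the lemma, as your fallback remark anticipates.

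\textbf{The choice of $c$.} Take $c=1/2+\varepsilon$, as the paper does, rather than $c=1/2+1/\log Y$. A bound $\sum_{m\le x}|A(m)|^2\ll x^{1+\varepsilon}$ for every $\varepsilon>0$ does not control $\sum_k|A(k)|k^{-1-1/\log Y}$ by $Y^{\varepsilon}$. For example, growth like $x\exp(\sqrt{\log x})$ would cost a power of $Y$ in the far-range error. By contrast, $\sum_k|A(k)|k^{-1-\varepsilon}\ll_\varepsilon 1$, and the resulting extra factor $Y^{\varepsilon}$ is absorbed in the usual way.
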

\begin{proof}
By the truncated Perron's formula in \cite[Theorem II 2.3]{tenenbaum}, we have
\begin{align*}
 \sum_{X/3M \leqslant k \leqslant 3X/M} \frac{A(k)}{k^{s}}
    & = \frac{1}{2i\pi} \int^{\frac12+\varepsilon+ iT}_{\frac12+\varepsilon - iT} L(s+z) \frac{(3X/M)^z - (X/3M)^z}{z}dz\\
    & \qquad +O\left((X/M)^{1/2+\varepsilon}\sum_{k\geq1}\frac{|A(k)|}{k^{1+\varepsilon}(1+T|\log (X/Mk)|)}\right).
\end{align*}
By dividing the sum in the error term into two parts, depending on the range of $k$, we get that the error term is bounded by
\begin{align*}
    &\frac1T\sum_{|k-X/M|\geq X/2M}\frac{|A(k)|}{k^{1+\varepsilon}}+\sum_{|k-X/M|< X/2M}\frac{|A(k)|}{k^{1+\varepsilon}(1+TM(k-X/M)/X)}\\
    &\ll_\varepsilon \frac1T+\frac XM\sum_{|k-X/M|< X/2M\atop k\neq X/M}\frac{|A(k)|}{k^{1+\varepsilon}(X/M+T|k-X/M|)}+\frac{A(X/M)}{(X/M)^{1+\varepsilon}}.
\end{align*}
Applying Cauchy-Schwarz inequality as well as the bounds \eqref{R-S bound} and \eqref{bound for coefficients}, the above formula yields 
\begin{align*}
   & \ll \frac1T+\frac XM\left(\sum_{|k-X/M|< X/2M\atop k\neq X/M}\frac{|A(k)|^2}{k^{1+\varepsilon}}\right)^{1/2}\left(\sum_{|k-X/M|< X/2M\atop k\neq X/M}\frac{1}{k^{1+\varepsilon}(X/M+T|k-X/M|)^2}\right)^{1/2}+\frac{1}{(X/M)^{1/2+\varepsilon}}\\
    &\ll\frac1T+\frac{(X/M)^{1/2-\varepsilon}}{(T(T+X/M))^{1/2}}+\frac{1}{(X/M)^{1/2+\varepsilon}}\\
    &\ll \frac1{T^{1/2}}+\frac{1}{(X/M)^{1/2+\varepsilon}}.
\end{align*}
Therefore, we have
\begin{align*}
 \sum_{X/3M \leqslant k \leqslant 3X/M} \frac{A(k)}{k^{s}}
    &=\frac{1}{2i\pi} \int^{\frac12+\varepsilon+ iT}_{\frac12+\varepsilon - iT} L(s+z) \frac{(3X/M)^z - (X/3M)^z}{z}dz\\
    &\qquad +O\left((X/M)^{\varepsilon}\left(1+\frac{(X/M)^{1/2}}{T^{1/2}}\right)\right).
\end{align*}
Shifting the line of integration from the line $\Re z=1/2+\varepsilon$ to the line
$\Re z=\varepsilon$ and applying the convexity bound
$L(s+z) \ll (1+|t|)^{2\theta(1/2-\sigma)+\varepsilon}$, the above integral is bounded by
\begin{align*}
    &X^\varepsilon\int_{\varepsilon}^{1/2+\varepsilon}\frac{(X/M)^\alpha(1+|t|+T)^{2\theta(1/2-\alpha)+\varepsilon}}{T}d\alpha+X^\varepsilon\int_{-T}^T\frac{(1+|t+h|)^{\theta+\varepsilon}}{\varepsilon+|h|}dh\\
    &\ll X^\varepsilon  T^\varepsilon \frac{T^\theta+(X/M)^{1/2}}{T}+X^\varepsilon  T^{\theta+\varepsilon}.
\end{align*}
Therefore, we have that for $|t|\leq T$, 
\begin{align*}
    \sum_{X/3M \leqslant k \leqslant 3X/M} \frac{A(k)}{k^s}
    \ll T^{\theta+\varepsilon} +(X/MT)^{1/2+\varepsilon}, 
\end{align*}
finishing the proof.
\end{proof}

 We can remove congruence conditions modulo $d$  in partial sums and relate to the corresponding full sums at the cost of an explicit polynomial in $p^{-s}$ for $p\mid d$. This is stated in the following lemma. 
 \begin{lem}[Removing congruences]
 \label{lem:d}
For all squarefree $d$ and for all $\Re(s)>1$, we have 
 \begin{equation}
 \sum_{\substack{m\geq 1\\ m \equiv 0 (d)}} \frac{A(m)}{m^s} = d^{-s} \prod_{p \mid d} p^s (1-P(p^{-s})) \sum_{m\geq1}\frac{A(m)}{m^s}.
 \end{equation}
 \end{lem}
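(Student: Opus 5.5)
The idea is to exploit multiplicativity through the Euler product \eqref{euler-product}. Since $d$ is squarefree, the condition $d \mid m$ splits into the independent local conditions $p \mid m$ for each $p \mid d$. For $\Re(s)>1$ the Dirichlet series converges absolutely, so we may write
\[
\sum_{\substack{m\geq 1\\ d\mid m}} \frac{A(m)}{m^s} = \prod_{p\nmid d} L_p(s) \prod_{p\mid d}\Big(\sum_{k\geq 1}\frac{A(p^k)}{p^{ks}}\Big) = \prod_{p\nmid d} L_p(s)\prod_{p\mid d}\big(L_p(s)-1\big).
\]
This uses that the coefficients are multiplicative, which follows from the Euler product (i). For each $p\mid d$ we then factor $L_p(s)-1 = L_p(s)\,(1-L_p(s)^{-1})$, which reassembles the full product:
\[
\sum_{\substack{m\geq 1\\ d\mid m}} \frac{A(m)}{m^s} = L(s)\prod_{p\mid d}\big(1-L_p(s)^{-1}\big).
\]

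It remains to match this with the form stated in the lemma. We interpret $P$ as the local inverse polynomial. In the Selberg class setting, as for $\GL(n)$, one has $L_p(s)^{-1} = P_p(p^{-s})$ for a polynomial $P_p$ of degree at most $d_L$ with $P_p(0)=1$; that is, $L_p(s)^{-1}=\prod_j(1-\alpha_j(p)p^{-s})$. Write $P(p^{-s})$ for $L_p(s)^{-1}$. The factor $d^{-s}\prod_{p\mid d}p^s$ equals $1$ because $d$ is squarefree. It is presumably written this way to indicate that $1-P(p^{-s})$ is divisible by $p^{-s}$, so that $p^s(1-P(p^{-s}))$ is again a polynomial in $p^{-s}$ of degree one less, and $d^{-s}$ carries the expected decay. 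With this, the identity $\sum_{d\mid m}A(m)m^{-s} = d^{-s}\prod_{p\mid d}p^s(1-P(p^{-s}))\,L(s)$ follows.

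The main obstacle is purely one of interpretation. One must make precise what $P$ is, namely the polynomial with $L_p(s)^{-1}=P(p^{-s})$, and justify that it is a polynomial. If the Selberg-class axioms as stated do not give a polynomial inverse, I would still define $P(p^{-s}) := L_p(s)^{-1}$ as a power series in $p^{-s}$ with constant term $1$. This is legitimate for $\Re(s)>1$ since $L_p(s)\neq 0$ there, by absolute convergence of the Euler product. The identity then holds verbatim, and everything else is the formal rearrangement above, justified by absolute convergence.
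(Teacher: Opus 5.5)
Your proof is correct and is essentially the paper's own argument. Both expand the restricted sum through the Euler product as $\prod_{p\nmid d}L_p(s)\prod_{p\mid d}\bigl(L_p(s)-1\bigr)$ and then divide by $L(s)$, with $P(p^{-s})=L_p(s)^{-1}$; the paper writes the $p\mid d$ factor as $p^{-s}\cdot p^{s}(P(p^{-s})^{-1}-1)$, which is where the trivial factor $d^{-s}\prod_{p\mid d}p^s=1$ comes from. One small correction to your fallback remark: absolute convergence of the Euler product does not by itself give $L_p(s)\neq 0$, since a convergent product may contain finitely many zero factors. Nonvanishing should instead come from $L_p(s)=P(p^{-s})^{-1}$ with $P$ a polynomial (or from a Selberg-class axiom on $\log L(s)$), which is what the statement implicitly presupposes.
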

 
 \begin{proof}

We compare Euler products and use the squarefreeness of $d$. Indeed, we have 
\begin{align}
\sum_{\substack{m \equiv 0(d)}} \frac{A(m)}{m^s} & = \sum_{m} \frac{A(dm)}{(dm)^s} = \prod_{p \mid d} p^{-s} \sum_{k \geqslant 0} \frac{A(p^{k+1})}{p^{ks}}\prod_{p \nmid d} \sum_{k \geqslant 0} \frac{A(p^k)}{p^{ks}} 
\end{align}
and,  moreover, 
\begin{equation}
\sum_{k \geqslant 0} \frac{A(p^{k+1})}{p^{ks}} = p^{s}\sum_{k \geqslant 0} \frac{A(p^{k+1})}{p^{(k+1)s}}  = p^s (P(p^{-s})^{-1} - 1).
\end{equation}
Dividing by the Euler product of the full sum, which amounts to multiplying by $P(p^{-s})$ at each place $p$, gives the result.
 \end{proof}


\begin{lem}
\label{lem:cs}
We have, for $\Re(s) = 1/2$, 
\begin{equation*}
    \left| \sum_{d \leq M} \mu(d) \sum_{\substack{m \sim M \\ m \equiv 0(d)}} \frac{A(m)}{m^s} \sum_{\substack{X/3M \leqslant k \leqslant 3X/M \\ k \equiv 0(d)}} \frac{A(k)}{k^s}\right|^2 \ll X^\varepsilon \left| \sum_{m \sim M/\ell_1} \frac{A(m)}{m^s} \right|^2\left| \sum_{X/3M\ell_2 \leqslant k \leqslant 3X/M\ell_2} \frac{A(k)}{k^s} \right|^2, 
\end{equation*}
for certain integers $\ell_1, \ell_2 \geqslant 1$.
\end{lem}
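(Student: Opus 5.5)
Write $m = dm'$ and $k = dk'$ in each inner sum, so that the left-hand side becomes
\[
\Big|\sum_{d\leq M}\mu(d)\, d^{-2s}\sum_{m'\sim M/d} \frac{A(dm')}{m'^s}\sum_{k'} \frac{A(dk')}{k'^s}\Big|^2 ,
\]
with $k'$ in $[X/3Md, 3X/Md]$.

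The idea is to remove the dependence on $d$ inside the coefficients $A(dm')$. We use the multiplicativity coming from the Euler product, in the form underlying Lemma~\ref{lem:d}. For squarefree $d$, split $m' = m_1 m_2$, where $m_1$ is supported on primes dividing $d$ and $(m_2,d)=1$. Then
\[
A(dm') = A(d m_1)\, A(m_2).
\]
Only the factor $A(m_2)$ carries the "generic" information; $A(dm_1)$ depends on the $d$-part alone.

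To restore a full sum over $m$ in a dyadic-type range, the coprimality condition $(m_2,d)=1$ is removed by Möbius inversion over divisors $e\mid d$. Each term then has the shape
\[
\sum_{n\sim M/\ell_1} \frac{A(n)}{n^s},
\]
with the summation variable restricted to a range of length comparable to $M/\ell_1$ for some integer $\ell_1$ built from $d$, $m_1$ and $e$. The same decomposition applies to the $k$-sum, producing $\ell_2$.

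Next we bound the outer sum. Apply the triangle inequality over all auxiliary variables $d, e, m_1, \dots$ and pull out the maximum over the pairs $(\ell_1,\ell_2)$. This bounds the left-hand side by
\[
\Big(\sum_{d,e,m_1,k_1} \frac{|A(dm_1)|\,|A(dk_1)|}{d\,(m_1k_1)^{1/2}}\Big)^2 \max_{\ell_1,\ell_2} |\,\cdot\,|^2|\,\cdot\,|^2 .
\]
It remains to show that the prefactor sum is $\ll X^\varepsilon$.

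The main obstacle is exactly this prefactor. On $\Re s = 1/2$ we have $|d^{-2s}| = d^{-1}$, so the sum over $d\leq M$ has at most logarithmic size provided the coefficients satisfy $A(dm_1)\ll (dm_1)^{\varepsilon}$. Bounding the coefficients via \eqref{bound for coefficients} alone would lose a power of $M = X^\delta$. The plan is therefore the following:

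1. Handle $\sum_d |A(d m_1)|\,|A(d k_1)|/d$ by Cauchy--Schwarz together with the Rankin--Selberg bound \eqref{R-S bound}. By partial summation this gives $\sum_{d\leq M}|A(d)|^2/d \ll M^\varepsilon$.
2. Handle the sums over $m_1, k_1$, which are supported on $d$-smooth numbers. These are controlled by the local factors $P(p^{-s})^{-1}$ at $p\mid d$, giving a product over $p\mid d$ that is $\ll d^\varepsilon$, provided the local coefficients are bounded at the primes dividing $d$.

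If this local bound fails in full generality, the fallback is to pass from the average over $(\ell_1,\ell_2)$ to the worst case only after the Cauchy--Schwarz step. Since everything is measured in $X^\varepsilon$ and $M\leq X$, any divisor-type losses are absorbed.
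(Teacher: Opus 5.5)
Your decomposition $m'=m_1m_2$ with $A(dm')=A(dm_1)A(m_2)$ is the same as the paper's $m=\ell m'$ with $P(\ell)=d$, but the next step has a genuine gap. You remove $(m_2,d)=1$ ``by M\"obius inversion over $e\mid d$''. Inserting $\sum_{e\mid(m_2,d)}\mu(e)$ produces sums $\sum_{m_2\sim Y,\,e\mid m_2}A(m_2)m_2^{-s}=e^{-s}\sum_{n\sim Y/e}A(en)n^{-s}$. Since $A$ is only multiplicative, $A(en)\neq A(e)A(n)$ as soon as $(n,e)>1$; for a level one $\mathrm{GL}(2)$ eigenform, $A(p)^2=A(p^2)+1$. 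So you land back on congruence-restricted sums of exactly the type you started with, and the argument is circular. M\"obius works here only when $A$ is completely multiplicative on $d$-smooth numbers, i.e.\ in degree one.

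What actually inverts the $d$-part is the inverse Euler factor. Writing $\prod_{p\mid d}L_p(w)^{-1}=\sum_{\ell\mid d^\infty}b_d(\ell)\ell^{-w}$, one has
\[
\sum_{\substack{n\sim Y\\ (n,d)=1}}\frac{A(n)}{n^s}=\sum_{\ell\mid d^\infty}\frac{b_d(\ell)}{\ell^s}\sum_{n\sim Y/\ell}\frac{A(n)}{n^s}.
\]
For level one $\mathrm{GL}(2)$ and $d=p$ this reads $S(Y)-A(p)p^{-s}S(Y/p)+p^{-2s}S(Y/p^2)$, where $S(Z)=\sum_{n\sim Z}A(n)n^{-s}$. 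Controlling the new weights needs structure not among the listed axioms, for example that $L_p(w)^{-1}$ is a polynomial of degree at most $n=\deg L$ in $p^{-w}$, as the $P(p^{-s})$ of Lemma~\ref{lem:d} tacitly presumes. In that case $A(m)\ll m^{1/2+\varepsilon}$ forces its inverse roots to have modulus at most $p^{1/2}$, whence $\sum_\ell|b_d(\ell)|\ell^{-1/2}\leq 2^{n\omega(d)}\ll d^{\varepsilon}$.

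The paper does not carry out this step either. It bounds everything by $\max_{d,\ell}|\sum_{m\sim B/\ell,\,(m,d)=1}A(m)m^{-s}|^2$ and then states the conclusion without the condition $(m,d)=1$. So what either argument honestly gives is the bound with coprimality-restricted sums on the right; the inversion above is what passes to the lemma as stated.

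The second problem is the prefactor. Your step~2 assumes bounded local coefficients, a Ramanujan-type input you do not have. Your step~1 is not a valid manipulation: $m_1,k_1$ run over $d$-smooth numbers, so they cannot be frozen while you apply Cauchy--Schwarz in $d$.

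The missing idea, which is the core of the paper's proof, is to reparametrize by $\ell=dm_1$. Because $d$ is squarefree and $m_1\mid d^\infty$, $d=P(\ell)$ is the radical of $\ell$, so each $\ell$ arises from exactly one pair $(d,m_1)$. Set $S_d(B)=\sum_{\ell\leq B,\,P(\ell)=d}|A(\ell)|\ell^{-1/2}$. Then, apart from the coprimality weights, your prefactor is $\sum_{d\leq M}S_d(2M)S_d(3X/M)$, using $|d^{-2s}|=d^{-1}$.

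Cauchy--Schwarz in $\ell$ gives $S_d(B)^2\leq N_d(B)\sum_{\ell\leq B,\,P(\ell)=d}|A(\ell)|^2/\ell$, where $N_d(B)=\#\{\ell\leq B:\ P(\ell)=d\}\leq\binom{\lfloor\log_2 B\rfloor}{\omega(d)}$. This is $B^{o(1)}$ uniformly in $d$, because $\omega(d)\ll\log B/\log\log B$. Summing over $d$ counts each $\ell$ once, so $\sum_d S_d(B)^2\ll B^{\varepsilon}\sum_{\ell\leq B}|A(\ell)|^2/\ell\ll B^{2\varepsilon}$ by the Rankin--Selberg bound alone. Cauchy--Schwarz in $d$ then bounds your product.

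With this device, your ``triangle inequality, then maximum over $(\ell_1,\ell_2)$'' structure is equivalent to the paper's (Cauchy--Schwarz in $d$ first, then maximum). No pointwise bounds on $A(m)$ are needed for the prefactor, and the fallback in your last paragraph becomes unnecessary.
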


\begin{proof}
Due to the presence of $\mu(d)$ inside the summand, all the $d$'s occurring in this proof are necessarily squarefree, even though we do not indicate it explicitly in the indexation of sums.
By Cauchy-Schwarz inequality, the left-hand side is bounded by
$$
\left(\sum_{d \leq M}\left|\sum_{\substack{m \sim M \\ m \equiv 0(d)}} \frac{A(m)}{m^s}\right|^2\right)\left(\sum_{d \leq M}\left| \sum_{\substack{X/3M \leqslant k \leqslant 3X/M \\ k \equiv 0(d)}} \frac{A(k)}{k^s}\right|^2\right) .
$$
This leads us to consider (recall that $\Re s=1/2$) the sums of the form
\begin{align*}
    \sum_{d \leq A}\left|\sum_{\substack{m \sim B \\ m \equiv 0(d)}} \frac{A(m)}{m^s}\right|^2
    &= \sum_{d \leq A}\left|\sum_{\ell=d\atop P(\ell)=d}^{2B}\frac{A(\ell)}{\ell^s}\sum_{\substack{m \sim B/\ell \\ (m,d)=1}} \frac{A(m)}{m^s}\right|^2\\
    &\leq \max_{d\leq A,\ell\leq 2B,P(\ell)=d}\left\{\left|\sum_{\substack{m \sim B/\ell \\ (m,d)=1}} \frac{A(m)}{m^s}\right|^2\right\} \sum_{d \leq A}\left|\sum_{\ell=d\atop P(\ell)=d}^{2B}\frac{|A(\ell)|}{\ell^{1/2}}\right|^2\\
     &\leq \max_{d\leq A,\ell\leq 2B,P(\ell)=d}\left\{\left|\sum_{\substack{m \sim B/\ell \\ (m,d)=1}} \frac{A(m)}{m^s}\right|^2\right\} \sum_{d \leq A}\left(\sum_{\ell=d\atop P(\ell)=d}^{2B}\frac{|A(\ell)|^2}{\ell}\right)\left(\sum_{\ell=d\atop P(\ell)=d}^{2B}1\right),
\end{align*}
where $A\ll B$ and $$P(\ell)=\prod_{p|\ell}p.$$
Note that
\begin{align}
    \sum_{\ell=d\atop P(\ell)=d}^{2B}1\ll \sum_{n_1+n_2+\cdots+n_{\omega(d)}\leq [\log (2B)]\atop n_1,n_2,\ldots,n_{\omega(d)}\geq 1}1\ll \binom{[\log (2B)]}{\omega(d)}\ll \left(\frac{c\log B}{\omega(d)}\right)^{\omega(d)}\ll e^{C\frac{(\log B)(\log\log\log B)}{\log\log B}},
\end{align}
by dividing $\omega(d)$ into two ranges $\omega(d)\ll (\log B)/(\log\log B)^2$ and $(\log B)/(\log\log B)^2\ll \omega(d)\ll (\log B)/(\log\log B)$. Here $c,C$ are two absolute constants and we have applied the well-known result
\begin{equation}\label{bound for omega(d)}
    \omega(d)\ll (\log d)/(\log\log d)\ll (\log A)/(\log\log A)\ll (\log B)/(\log\log B).
\end{equation}

Next, we have
\begin{align*}
 \sum_{d \leq A}\sum_{\ell=d\atop P(\ell)=d}^{2B}\frac{|A(\ell)|^2}{\ell}
 \leq   \sum_{d \leq A}\sum_{\substack{\ell \leq 2B \\ \ell \equiv 0(d)}} \frac{|A(\ell)|^2}{\ell}\leq B^\varepsilon\sum_{d \leq A}\sum_{\substack{\ell \leq 2B \\ \ell \equiv 0(d)}} \frac{|A(\ell)|^2}{\ell^{1+\varepsilon}}\leq B^\varepsilon\sum_{d \leq A}\sum_{\substack{\ell =1 \\ \ell \equiv 0(d)}}^\infty \frac{|A(\ell)|^2}{\ell^{1+\varepsilon}}.
\end{align*}
Hence, we apply Lemma \ref{lem:d} to the inner sum on the right hand side and obtain
\begin{align*}
\sum_{d \leq A}\sum_{\ell=d\atop P(\ell)=d}^{B}\frac{|A(\ell)|^2}{\ell}
&\ll B^{\varepsilon}\sum_{d \leq A}\prod_{p|d}\left(\frac{|A(p)|^2}{p^{1+\varepsilon}}-\frac{|A(p)|^2}{p^{2+2\varepsilon}}+\frac{1}{p^{3+3\varepsilon}}\right)\sum_{\ell=1}\frac{|A(\ell)|^2}{\ell^{1+\varepsilon}}\\
&\ll B^\varepsilon\sum_{d \leq A}2^{\omega(d)}\prod_{p|d}\frac{|A(p)|^2+1}{p^{1+\varepsilon}}\sum_{\ell=1}\frac{|A(\ell)|^2}{\ell^{1+\varepsilon}}.
\end{align*}
Applying the Rankin-Selberg bound \eqref{R-S bound} and \eqref{bound for omega(d)}, we get 
$$
\sum_{d \leq A}\sum_{\ell=d\atop P(\ell)=d}^{B}\frac{|A(\ell)|^2}{\ell}\ll B^\varepsilon\sum_{d \leq A}\frac{\sum\limits_{h|d}|A(h)|^2}{d^{1+\varepsilon}}\ll B^\varepsilon\sum_{h \leq A}\frac{|A(h)|^2}{h^{1+\varepsilon}}\sum_{j\leq A/h}\frac1{j^{1+\varepsilon}}\ll B^\varepsilon.
$$
Moreover, the maximum shown above is reached for one of the finite possibilities of $\ell$.
Therefore, the original sum is bounded by
\begin{align*}
X^\varepsilon \left| \sum_{m \sim M/\ell_1} \frac{A(m)}{m^s} \right|^2\left| \sum_{X/3M\ell_2 \leqslant k \leqslant 3X/M\ell_2} \frac{A(k)}{k^s} \right|^2
\end{align*}
for some integers $\ell_1,\ell_2 \geqslant 1$, as claimed. 
\end{proof}

We will need the following mean-value theorem, see \cite[Chapter 9]{iwaniec_analytic_2004}, for Dirichlet polynomials:
\begin{lem}[Mean-value theorem]
\label{lem:mean-value-theorem}
Let $N \geqslant 1$ and let $F(s) = \sum_{n\sim N} a_n n^{-s}$ be a Dirichlet polynomial. Then
\begin{equation}
\label{mean-value-theorem}
\int_{-T}^T \left| F(\tfrac12 + it) \right|^2 dt \ll (N+T) \sum_{n \sim N}  \frac{|a_n|^2}{n}.
\end{equation}
\end{lem}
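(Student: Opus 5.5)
We prove the mean-value theorem for Dirichlet polynomials (Lemma \ref{lem:mean-value-theorem}) by the classical Montgomery--Vaughan route, reducing it to a Hilbert-type inequality. Write $b_n = a_n n^{-1/2}$, so that $F(\tfrac12+it) = \sum_{n\sim N} b_n n^{-it}$. Expanding the square gives
\begin{equation*}
\int_{-T}^T |F(\tfrac12+it)|^2\,dt = 2T\sum_{n\sim N}|b_n|^2 + \sum_{\substack{m,n\sim N\\ m\neq n}} b_m\overline{b_n}\int_{-T}^T \Big(\frac{n}{m}\Big)^{it}dt .
\end{equation*}
The diagonal already contributes $2T\sum |a_n|^2/n$, which is acceptable. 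For the off-diagonal terms, the inner integral equals $2\sin(T\log(n/m))/\log(n/m)$.

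It is cleaner to avoid the oscillating sine directly. I would instead majorize the sharp cutoff $\mathbf 1_{[-T,T]}$ by a smooth nonnegative weight $w(t)$, equal to $1$ on $[-T,T]$ and supported on $[-2T,2T]$, whose Fourier transform $\hat w(\xi)$ satisfies $\hat w(\xi)\ll T(1+T|\xi|)^{-2}$. Since $|F|^2\geq 0$, we get $\int_{-T}^T|F|^2\leq \int w|F|^2$. This equals $\sum_{m,n} b_m\overline{b_n}\,\hat w(\log(m/n)/2\pi)$, up to normalization. Applying $|b_m\overline{b_n}|\leq \tfrac12(|b_m|^2+|b_n|^2)$ and using symmetry, it then suffices to bound, for each fixed $n\sim N$,
\begin{equation*}
\sum_{m\sim N} \frac{T}{(1+T|\log(m/n)|)^2}.
\end{equation*}

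For $m,n\sim N$ we have $|\log(m/n)|\asymp |m-n|/N$. The term $m=n$ contributes $T$. The terms with $m\neq n$ contribute
\begin{equation*}
\sum_{j\geq1} \frac{T}{(1+Tj/N)^2}\ll T\cdot\min\!\Big(\sum_{j\ge1}1_{j\le N},\ \frac{N}{T}\Big) + \ldots \ll N + T,
\end{equation*}
as one checks by splitting at $j\approx N/T$. Terms with $j\leq N/T$ contribute at most $T\cdot N/T=N$, and the tail contributes $\sum_{j>N/T} N^2/(Tj^2)\ll N$. Hence the whole quadratic form is $\ll (N+T)\sum|b_n|^2=(N+T)\sum|a_n|^2/n$, which is the claim.

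The only delicate point is the construction of $w$ with the required Fourier decay. One option is to take $w = \mathbf 1_{[-3T/2,3T/2]} * \phi_T$, where $\phi_T(t)=T^{-1}\phi(t/T)$ and $\phi$ is a fixed smooth bump supported in $[-1/2,1/2]$ with integral $1$. This gives $\hat w(\xi)=\widehat{\mathbf 1}(\xi)\hat\phi(T\xi)\ll \min(T,|\xi|^{-1})(1+T|\xi|)^{-A}$, which is more than enough. Alternatively, one can simply cite the Montgomery--Vaughan Hilbert inequality, which yields the sharper constant $2T+O(N)$, in place of this step.
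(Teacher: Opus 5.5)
Your proof is correct, and it takes a different route from the paper, which does not prove this lemma at all: it simply quotes it from Iwaniec--Kowalski, Chapter~9. The result behind that citation is the Montgomery--Vaughan mean-value theorem. It is proved via a Hilbert-type inequality and gives the sharper form $\int_{-T}^{T}|\sum_n b_n n^{-it}|^2\,dt=(2T+O(N))\sum_n|b_n|^2$.

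You instead majorize $\mathbf 1_{[-T,T]}$ by $w=\mathbf 1_{[-3T/2,3T/2]}*\phi_T$, expand the square, and apply a Schur test using $|\log(m/n)|\ge|m-n|/(2N)$. This $w$ equals $1$ on $[-T,T]$ and is supported in $[-2T,2T]$. It is $\ge 0$ provided you take $\phi\ge 0$, which you need for $\int_{-T}^{T}|F|^2\le\int w|F|^2$, so say it explicitly.

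The row-sum bound $\sum_{m} T(1+T|\log(m/n)|)^{-2}\ll T+N$ is right. Your displayed $\min(\cdot)$ expression is garbled, but the splitting at $j\approx N/T$ that you describe afterwards works. It also covers the case $T>N$, where only the tail $\sum_{j\ge1}N^2/(Tj^2)\ll N^2/T\le N$ occurs.

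Your route buys a self-contained, elementary argument (Fourier decay plus Schur's test). It only uses that $m/n$ is bounded above and below on the support. That is exactly the situation in which the paper applies the lemma, for instance to $K(s)$ over $X/3M\le k\le 3X/M$, which is not strictly dyadic. The cited theorem buys the sharp leading constant $2T$ and avoids any smoothing. That extra precision is irrelevant for the $\ll$ statement the paper needs.
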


\subsection{Reducing to an integral via Perron formula}

We are now ready to study the sough second moment.
Perron's formula relates the sum of coefficients appearing therein to the inverse Mellin transform of the associated $L$-function: 
\begin{equation}
\sum_{\substack{mk \in [x, x+H] \\ m \sim M  \\ (m,k)=1}} A(mk) = \frac{1}{2i\pi} \int_{(2)} \sum_{\substack{m \sim M \\ X/3M \leqslant k \leqslant 3X/M\\(m,k)=1}} \frac{A(mk)}{(mk)^s} \frac{(x+H)^s - x^s}{s}ds.
\end{equation}

\subsection{Möbius to detect coprimality}

We appeal to the Saffari-Vaughan argument as in \cite{jaasaari_signs_2022}, multiplicativity of coefficients and Möbius inversion to detect the coprimality condition, so that the second moment \eqref{second-moment} is bounded by
\begin{equation}
\label{expanded-second-moment}
\int_X^{2X} \Bigg|\int_{(2)}  \sum_{d \leq M} \mu(d) \sum_{\substack{m \sim M \\ m \equiv 0(d)}} \frac{A(m)}{m^s} \sum_{\substack{ X/3M \leqslant k \leqslant 3X/M \\ k \equiv 0(d)}} \frac{A(k)}{k^s} x^s \frac{(1+u)^s - 1}{s}ds \Bigg|^2 dx, 
 \end{equation}
for some given $u \ll H/X$.  We will appeal various times to the bound
\begin{equation}
\label{fraction-bound}
\frac{(1+u)^s -1}{s} \ll \max\left( \frac{H}{X}, \frac{1}{|\Im(s)|}\right), 
\end{equation}
valid for all $u \ll H/X$.

\subsection{Moving the contours}

We now need to study the integral \eqref{expanded-second-moment}, by moving the contours. The integral in on a full vertical line inside the domain of absolute convergence of all the Dirichlet series therein, and we start by removing the high parts before moving contours. For simplicity, put
$$
F(s)=\sum_{d \leq M} \mu(d)\sum_{\substack{m \sim M \\ m \equiv 0(d)}} \frac{A(m)}{m^s} \sum_{\substack{k \sim X/M \\ k \equiv 0(d)}} \frac{A(k)}{k^s}.
$$

\subsubsection{High parts on the right}

We follow \textit{mutatis mutandis} Jääsaari's strategy in \cite{jaasaari_signs_2022}, and only recall briefly his argument for completeness. We bound the contribution of the large imaginary parts to the integral as follows: 
\begin{lem}
We have, for all $U>X^5$, for all $0<\Xi<1$, 
\begin{equation}
\int_X^{2X} \Bigg|  \int_{\substack{(1+\Xi) \\ \Im > U}} F(s)x^s \frac{(1+u)^s - 1}{s}ds \Bigg|^2 dx \ll X^{3+2\Xi}U^{-1}.
\end{equation}

\end{lem}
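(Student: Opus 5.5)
The approach is to treat the tail integral as a Mellin-type transform in $x$ and bound its $L^2$ norm over $[X,2X]$ by expanding the square, so that the oscillation of $x^{s}$ for large imaginary parts can be exploited. On the line $s = 1+\Xi+it$, $t>U$, we have absolute convergence, and the definition of $F$ and \eqref{bound for coefficients} give the trivial bound
\[
|F(1+\Xi+it)| \leq \sum_{m,k} \frac{|A(m)||A(k)|}{(mk)^{1+\Xi}}\, \tau(\cdot) \ll X^{\varepsilon},
\]
since the number of relevant $d$ is at most $M$ and each Dirichlet series is bounded on $\Re s = 1+\Xi$ by Rankin--Selberg and partial summation. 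Crudely, one can also use $|F|\ll X^{C}$ for some fixed $C$; only polynomial size in $X$ matters, since $U > X^5$ absorbs it. The plan is to write the inner integral as $x^{1+\Xi}\int_{U}^\infty F(1+\Xi+it)\,x^{it}\,\frac{(1+u)^{s}-1}{s}\,dt$ and to perform an integration by parts in $t$, gaining a factor $1/\log x$ together with derivatives of $F(s)\frac{(1+u)^s-1}{s}$.

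That version of the argument is the delicate one, since $(1+u)^s-1$ does not decay. A cleaner route, following Jääsaari, is to expand the square and integrate in $x$ first:
\[
\int_X^{2X} \Big|\int_U^\infty g(t)\, x^{1+\Xi+it}\,dt\Big|^2 dx = \int\!\!\int g(t_1)\overline{g(t_2)} \int_X^{2X} x^{2+2\Xi+i(t_1-t_2)}\,dx\,dt_1\,dt_2,
\]
where $g(t)=F(1+\Xi+it)\frac{(1+u)^{s}-1}{s}$. The $x$-integral is $\ll X^{3+2\Xi}\min\big(1, |t_1-t_2|^{-1}\big)$. Since $|g(t)|\ll X^{\varepsilon}/t$ by \eqref{fraction-bound} (with $|(1+u)^s|\leq (1+u)^{1+\Xi}\ll 1$), the double integral is bounded by
\[
X^{3+2\Xi+\varepsilon}\int_U^\infty\!\!\int_U^\infty \frac{\min(1,|t_1-t_2|^{-1})}{t_1t_2}\,dt_1\,dt_2 .
\]
Splitting into the regions $|t_1-t_2|\leq 1$, $1<|t_1-t_2|\leq t_1/2$ and the remainder, this is $\ll (\log U)/U$. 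Altogether this gives $X^{3+2\Xi+\varepsilon}U^{-1}\log U$.

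The main obstacle is removing the $X^\varepsilon\log U$ losses to reach exactly $X^{3+2\Xi}U^{-1}$, or else accepting that the statement is up to $X^\varepsilon$, as is implicit elsewhere in the paper. The bound on $F$ on $\Re s=1+\Xi$ is genuinely $O_\Xi(1)$: the sum over $d$ is absorbed by $\sum_d d^{-2(1+\Xi)}$-type factors coming from $m,k\equiv 0\ (d)$. The $\log U$ I would handle by using the convergence of $\int\!\!\int$ more carefully, or by absorbing it using $U>X^5$ with a slightly adjusted exponent. I expect the paper to tolerate these $\varepsilon$-losses.
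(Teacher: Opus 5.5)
Your second route is the paper's argument: expand the square, integrate in $x$ first so that only $t_1\approx t_2$ contributes, and combine $F\ll 1$ on $\Re s=1+\Xi$ with \eqref{fraction-bound} to get a $1/t$ bound for the $t$-integrand. The gap is the $\log U$, and neither of your proposed remedies removes it.

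With the sharp cutoff $\int_X^{2X}$, the $x$-integral only decays like $\min(1,|t_1-t_2|^{-1})$. Once absolute values are taken, $\iint_{[U,\infty)^2}\min(1,|t_1-t_2|^{-1})\,(t_1t_2)^{-1}\,dt_1\,dt_2\asymp U^{-1}\log U$ is the true size of that integral, so evaluating it more carefully cannot help. Using $U>X^5$ only turns this into $U^{-1+\varepsilon}$, not $U^{-1}$ uniformly in $U$. (For the paper's later use with $U=X^{2026}$ the logarithm would be harmless, but it is not the stated bound.)

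The missing step, which is what the paper's smooth weight is for, is to smooth before expanding. Since the integrand is nonnegative, you may replace $\mathbf{1}_{[X,2X]}(x)$ by $w(x/X)$, where $w\geq 0$ is smooth, supported in $[1/2,3]$ and equal to $1$ on $[1,2]$. Two integrations by parts give $\int w(x/X)\,x^{2+2\Xi+i\tau}\,dx=X^{3+2\Xi+i\tau}\int w(y)\,y^{2+2\Xi+i\tau}\,dy\ll X^{3+2\Xi}(1+|\tau|)^{-2}$. Then $\frac{1}{t_1t_2}\leq\frac12\big(t_1^{-2}+t_2^{-2}\big)$ gives $\iint_{[U,\infty)^2}\frac{dt_1\,dt_2}{t_1t_2(1+|t_1-t_2|)^{2}}\leq\int_U^\infty\frac{dt}{t^2}\int_{\mathbb{R}}\frac{dv}{(1+|v|)^2}\ll U^{-1}$, with no logarithm.

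Two smaller points. First, your justification of $F\ll 1$ does not hold as written. Counting the $d\leq M$ costs a factor $M=X^{\delta}$, Rankin--Selberg alone does not give a saving like $d^{-1-\Xi}$ per factor, and the fallback $|F|\ll X^{C}$ would not give the stated bound at all. What Cauchy--Schwarz and \eqref{R-S bound} do give is $\sum_{m\sim M,\, d\mid m}|A(m)|m^{-1-\Xi}\ll M^{-1-\Xi}\cdot M^{1/2+\varepsilon}(M/d)^{1/2}=M^{-\Xi+\varepsilon}d^{-1/2}$, and likewise $(X/M)^{-\Xi+\varepsilon}d^{-1/2}$ for the $k$-sum. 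Hence $F\ll X^{-\Xi+2\varepsilon}\sum_{d\leq M}d^{-1}\ll 1$.

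Second, the $t$-integrand is only $O(1/t)$, so the tail integral need not converge absolutely. The square should therefore be expanded for the truncation $U\leq t\leq T$, where Fubini is legitimate, with bounds uniform in $T$; then let $T\to\infty$ using Fatou's lemma. The paper passes over this as well.
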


We open the square, which selects essentially the diagonal term (killing the oscillations $x^{s_1 + \bar{s}_2}$, which does matter since $x$ is large).  Indeed,  after expanding the square, we get
\begin{align}
& \iint_{\substack{(1+\Xi) \\ \Im > U}} \left|F(s_1)\overline{F(s_2)}\frac{(1+u)^{s_1}-1}{s_1}\overline{\frac{(1+u)^{s_2}-1}{s_2}}\right|  \times \left| \int_\R g(x/X) x^{s_1+\bar{s}_2} dx \right|ds_1ds_2, 
\end{align}
where $g$ is a compactly supported smooth function.

Now the integral inside is vanishingly small (i.e. $\ll X^{-A}$ for all $A>0$) if $\Im(s_1+\bar{s}_2) \neq 0$ by stationary phase. When $s_1 = s_2$, it is bounded by $X^{2\Re(s)+1}$. 
We then use the absolute convergence of $F(s)$ on $\Re(s)>1$, as well as the explicit bound on the $u$-term given in \eqref{fraction-bound}. We obtain a final bound on these terms of size $X^{3+2\Xi}/U$ and this is negligible when  $U>X^5$. 

\subsubsection{Horizontal parts}

By \eqref{bound for coefficients}, we obtain that for $\Re(s)=\sigma\geq1/2$,
\begin{equation*}\label{bound for F(s)}
    F(s)\ll X^{3/2-\sigma+\varepsilon}M.
\end{equation*}
Jääsaari's computations \cite{jaasaari_signs_2022} carry on until the bound (for $x \asymp X$)
\begin{equation}
\ll X\int_{1/2}^{1+\Xi} \left|F(\sigma\pm iU) x^\sigma \frac{(1+u)^{\sigma \pm iU} - 1}{\sigma \pm iU}\right|^2 d\sigma.
\end{equation}

%
%

Using that the fraction is $\ll 1/|t| \asymp 1/U$ by \eqref{fraction-bound} and squaring these bounds, we obtain altogether the bound
\begin{equation}
\ll X^{1+\varepsilon}\int_{1/2}^{1+\Xi} X^{3-2\sigma}M^2 U^{-2} d\sigma \ll X^4U^{-2}.
\end{equation}
This is negligible when $U>X^5$. 
%
%

\subsubsection{Bulk on the left}

These contours estimates explain that the integral we are studying is, up to the error terms displayed above, the same as the one on the vertical segment $[1/2\pm iU]$, that we shall study now. The second moment is therefore
\begin{equation}
\label{splitting}
\underbrace{H^2 \int_{1/2}^{1/2+iX/H} |F(s)|^2 ds}_{=: I_1} + \underbrace{X^2 \int_{1/2+iX/H}^{1/2+iU} \frac{|F(s)|^2}{|t|^2}dt}_{=: I_2}
\end{equation}
where the splitting has been sparked by the inequality \eqref{fraction-bound} that we used accordingly in each range. We will estimate separately each of these integrals.

We appeal to Lemma \ref{lem:cs} to get that

\begin{align}
I_1 \ll H^2 X^\varepsilon \int_{1/2}^{1/2+iX/H} \left|\sum_{\substack{m \sim M/\ell_1 }} \frac{A(m)}{m^s} \right|^2\left|\sum_{\substack{X/3M\ell_2 \leqslant k \leqslant 3X/M\ell_2 }} \frac{A(k)}{k^s}\right|^2 ds
\end{align}
and that 
\begin{align}
I_2 \ll X^{2+\varepsilon} \int_{1/2+iX/H}^{1/2+iU} \frac{1}{|t|^2} \left|\sum_{\substack{m \sim M/\ell_1 }} \frac{A(m)}{m^s} \right|^2\left|\sum_{\substack{X/3M\ell_2 \leqslant k \leqslant 3X/M\ell_2 }} \frac{A(k)}{k^s}\right|^2 ds
\end{align}
for some integers $\ell_1, \ell_2 \geqslant 1$. Note that since, ultimately, the bounds will be given by positive powers of $M$, we can ignore the $\ell_i$ and take them equal to $1$.

For the first integral in \eqref{splitting}, we split it into the set $\mathcal{T} = \{t \in [0, X/H] \ : \ |M(\tfrac12+it)| \leqslant M^{1/2-\delta}\}$ and its complementary $\mathcal{U}$ in $[0, X/H]$. By Lemma \ref{lem:mean-value-theorem} and \eqref{R-S bound}, the contribution of $\mathcal{T}$ is bounded by
\begin{align}\label{contribution of T}
&\int_\mathcal{T}\left|M\left(\frac12+it\right)\right|^2\left|K\left(\frac12+it\right)\right|^2dt\nonumber\\
&\ll M^{1-2\delta}\left(\frac XH+\frac XM\right)\sum_{ k \sim X/M}\frac{|A(k)|^2}{k}\ll X^{1+\varepsilon}M^{-2\delta}\ll X^{1-\delta^2}    
\end{align}
provided that $\varepsilon<\delta^2/10^{2026}$.

Since $M=X^\delta$ and $X^{2\delta}\ll H\ll X$, there exists a positive integer $\ell= [1/\delta]+1$ such that $M^\ell\gg X/H$.
By Cauchy-Schwarz inequatlity, Lemma \ref{lem:mean-value-theorem} and \eqref{R-S bound}, we have
\begin{align*}
    \mathrm{meas}(\mathcal{U})M^{2\ell(1/2-\delta)}
    &\leq\int_0^{X/H}\left|M\left(\frac12+it\right)\right|^{2\ell}dt\\
    &\ll\int_0^{X/H}\ell\sum_{j=0}^{\ell-1}\left|\sum_{n\sim 2^jM^\ell}\frac{\sum\limits_{n=m_1\cdots m_\ell\atop m_i\sim M}A(m_1)\cdots A(m_\ell)}{n^s}\right|^2dt\\
    &\ll \ell\sum_{j=0}^{\ell-1}\left(\frac XH+2^j M^\ell\right)\sum_{n\sim 2^jM^\ell}\frac{\left|\sum\limits_{n=m_1\cdots m_\ell\atop m_i\sim M}A(m_1)\cdots A(m_\ell)\right|^2}{n}\\
    &\ll \ell\sum_{j=0}^{\ell-1}\sum_{n\sim 2^jM^\ell}d_\ell(n)2^j
    M^\ell\sum\limits_{n=m_1\cdots m_\ell\atop m_i\sim M}\frac{\left|A(m_1)\cdots A(m_\ell)\right|^2}{n}\\
    &\ll \ell 2^\ell X^\varepsilon \left(\sum_{m\sim M}|A(m)|^2\right)^\ell\\
    &\ll \ell 2^{\ell} X^\varepsilon M^{\ell+\ell\varepsilon}.
\end{align*}
Hence, for $\varepsilon<\delta^2/10^{2026}$,
$$
\mathrm{meas}(\mathcal{U})\ll \ell 2^\ell X^{3\varepsilon} M^{2\ell\delta}\ll_\delta X^{4\delta}
$$
and by \eqref{bound for M(s)}, the contribution of $\mathcal{U}$ to the integral in $I_1$ is bounded by 
$$
\int_{\mathcal{U}}\left|M\left(\frac12+it\right)\right|^2\left|K\left(\frac12+it\right)\right|^2dt\ll X^{5\delta+\varepsilon}\sup_{|t|\leq X/H}\left|K\left(\frac12+it\right)\right|^2.
$$
Combining the above formula with \eqref{contribution of T} and Lemma \ref{lem:subconvexity-partial-sums}, we have
\begin{align}\label{I1}
I_1&\ll H^2X^{1-\delta^2}+H^2X^{5\delta+\varepsilon}\sup_{|t|\leq X/H}\left|K\left(\frac12+it\right)\right|^2\nonumber\\
&\ll H^2X^{1-\delta^2}+H^2X^{6\delta}\left((X/H)^{2\theta}+H/M\right).
\end{align}

Next, we turn to $I_2$ and it is bounded by
\begin{align*}
&X^{2+\varepsilon}\int_{X/H}^{MX/H}\frac1{|t|^2}\left|M\left(\frac12+it\right)\right|^2\left|K\left(\frac12+it\right)\right|^2dt+X^{2+\varepsilon}\int_{MX/H}^{U}\frac1{|t|^2}\left|M\left(\frac12+it\right)\right|^2\left|K\left(\frac12+it\right)\right|^2dt\\
&\ll I_3+I_4,
\end{align*}
where
$$
I_3=H^2X^\varepsilon\int_{X/H}^{MX/H}\left|M\left(\frac12+it\right)\right|^2\left|K\left(\frac12+it\right)\right|^2dt
$$
and
$$
I_4=X^{2+\varepsilon}\int_{MX/H}^{U}\frac1{|t|^2}\left|M\left(\frac12+it\right)\right|^2\left|K\left(\frac12+it\right)\right|^2dt.
$$
Similarly to the above argument to bound $I_1$, we get 
\begin{equation*}
I_3\ll H^2X^{1-\delta^2}+H^2X^{5\delta+\varepsilon}\sup_{|t|\leq MX/H}\left|K\left(\frac12+it\right)\right|^2.
\end{equation*}
Then by Lemma \ref{lem:subconvexity-partial-sums}, we have
\begin{equation}\label{I3}
I_3\ll H^2X^{1-\delta^2}+H^2X^{6\delta}\left((MX/H)^{2\theta}+H/M^2\right).
\end{equation}
Taking $U=X^{2026}$, by \eqref{bound for M(s)} and Lemma \ref{lem:mean-value-theorem}, 
\begin{align*}
    I_4&\ll  X^{2+\varepsilon}M^{1+\varepsilon}\sum_{n=1}^{[2026\log X]+1}\int_{2^{n}MX/H}^{2^{n+1}MX/H}\frac1{|t|^2}\left|K\left(\frac12+it\right)\right|^2dt\\
   &\ll X^{2+\varepsilon}M^{1+\varepsilon}\sup_{MX/H\leq T\leq X^{2026}}\frac1{T^2}\int_T^{2T}\left|K\left(\frac12+it\right)\right|^2dt\\
    &\ll X^{2+\varepsilon}M^{1+\varepsilon}\sup_{MX/H\leq T\leq X^{2026}}\frac{X/M+T}{T^2}\sum_{ k \sim X/M}\frac{|A(k)|^2}{k}\\
    &\ll X^{1+2\varepsilon}H^2M^{-2}+X^{1+2\varepsilon}H\ll H^2X^{1-\delta^2}.
\end{align*}
Thus, we have
$$
I_2\ll I_3+I_4\ll  H^2X^{1-\delta^2}+H^2X^{6\delta}\left((MX/H)^{2\theta}+H/M^2\right).
$$

All in all, \eqref{splitting} is bounded by
\begin{equation}
H^2X^{1-\delta^2}+H^2X^{6\delta}\left((MX/H)^{2\theta}+H/M\right)\ll  H^2X^{1-\delta^2}+X^{2\theta(1+\delta)+6\delta}H^{2-2\theta}.
\end{equation}

This is optimized for
\begin{equation}\label{choice of H}
H =
\begin{cases}
    X^{1+\delta -\frac{1}{2\theta} + \frac{6\delta + 2\delta^2}{2\theta}} & \text{if } \theta \geqslant 1/2, \\
    X^{6\delta} & \text{otherwise,}
\end{cases}
\end{equation}
which gives a final bound of 
\begin{equation}
H^2X^{1-2\delta^2}  =
\begin{cases}
    X^{3 +2\delta- \frac{1}{\theta} + \frac{6\delta + 2\delta^2}{\theta} - 2\delta^2} & \text{if } \theta \geqslant 1/2, \\ 
    X^{1+12\delta-2\delta^2} & \text{otherwise.}
\end{cases}
\end{equation}

From the second moment bound, we can derive pointwise bounds for most of the individual sums: 
\begin{coro}\label{coro1}
For all $C \geqslant 1$, we have that at least a proportion $1-C^{-2}$ of $x \sim X$ such that
\begin{equation}
\Sigma := \sum_{\substack{mk \in [x, x+H] \\ m \sim M \\ k \sim X/M \\ (m,k)=1}} A(mk) < CHX^{-\delta^2/2},
\end{equation}
for                                                                     $H=X^{1 +\delta-\frac{1}{2\theta} + \frac{6\delta + 2\delta^2}{2\theta}}$ if $\theta\geq 1/2$ and $H=X^{6\delta}$ if $0<\theta<1/2$.
\end{coro}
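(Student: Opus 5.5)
This is a Chebyshev/Markov argument applied to the second moment bound of the Proposition. Write
\[
\Sigma(x) = \sum_{\substack{mk \in [x, x+H] \\ m \sim M \\ k \sim X/M \\ (m,k)=1}} A(mk), \qquad V := \int_X^{2X} |\Sigma(x)|^2\,dx .
\]
For $x \sim X$ and $m \sim M$, the condition $mk \in [x,x+H]$ forces $k \asymp X/M$. So the constraint $k \sim X/M$ (after adjusting dyadic ranges as in the definition of $K(s)$) is the same sum the Proposition controls. Hence
\[
V \ll H^2X^{1-\delta^2}+X^{2\theta(1+\delta)+6\delta}H^{2-2\theta}.
\]

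\textbf{Step 1: compare the two terms.} Substitute the stated choice of $H$ into this bound and check that the second term is dominated by the first.

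For $\theta \geq 1/2$ the condition is
\[
X^{2\theta(1+\delta)+6\delta} \ll H^{2\theta}X^{1-\delta^2}.
\]
With $H=X^{1+\delta-\frac1{2\theta}+\frac{6\delta+2\delta^2}{2\theta}}$ we get
\[
H^{2\theta}=X^{2\theta+2\theta\delta-1+6\delta+2\delta^2},
\]
so $H^{2\theta}X^{1-\delta^2}=X^{2\theta(1+\delta)+6\delta+\delta^2}$. This beats the second term by $X^{\delta^2}$.

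For $\theta<1/2$ with $H=X^{6\delta}$, I would instead fall back on the intermediate bound before simplification,
\[
H^2X^{1-\delta^2}+H^2X^{6\delta}\bigl((MX/H)^{2\theta}+H/M\bigr).
\]
I would compare each piece with $H^2X^{1-\delta^2}$. The term $(MX/H)^{2\theta}X^{6\delta}$ is at most $X^{(1+\delta)2\theta+6\delta}<X^{1+\delta+6\delta}$. Here I would use the paper's optimization, taking the exponent bookkeeping as given. The possible mismatch in the small-$\theta$ case is the main obstacle: it may need $\delta$ small relative to $1-2\theta$, or a slightly larger $\varepsilon$-loss. In either case the outcome is $V \ll H^2X^{1-\delta^2}$, up to $X^\varepsilon$ absorbed into the exponent.

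\textbf{Step 2: Chebyshev.} Let
\[
E=\{x\sim X : |\Sigma(x)| \geq CHX^{-\delta^2/2}\}.
\]
Then
\[
\mathrm{meas}(E)\,C^2H^2X^{-\delta^2} \leq V \ll H^2X^{1-\delta^2},
\]
so $\mathrm{meas}(E)\ll C^{-2}X$. The complement therefore has measure at least $(1-C^{-2})X$, after normalizing the implied constant into $C$ or into the definition of "proportion." On the complement $|\Sigma|<CHX^{-\delta^2/2}$, which gives the claim.

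Finally, the statement is phrased with $A(mk)$. By multiplicativity and $(m,k)=1$ we have $A(mk)=A(m)A(k)$, consistent with the Dirichlet series used in the Proposition, so nothing further is needed there.
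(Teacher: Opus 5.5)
Your route, Chebyshev applied to the Proposition's second-moment bound, is the paper's. Your Step~1 for $\theta\geq 1/2$ is correct: $H^{2\theta}X^{1-\delta^2}=X^{2\theta(1+\delta)+6\delta+\delta^2}$, so the second term is smaller by $X^{\delta^2}$. This is more than the paper checks, since its proof simply asserts $\int|\Sigma|^2\ll H^2X^{1-\delta^2}$.

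The genuine gap is the case $\theta<1/2$. There you defer to the paper's optimization and expect at worst an $\varepsilon$-loss, and that step fails. With $M=X^{\delta}$ and $H=X^{6\delta}$ the two terms of the Proposition are
\[
H^2X^{1-\delta^2}=X^{1+12\delta-\delta^2},\qquad X^{2\theta(1+\delta)+6\delta}H^{2-2\theta}=X^{2\theta(1-5\delta)+18\delta}.
\]
So the second term dominates exactly when $2\theta(1-5\delta)>1-6\delta-\delta^2$. This holds on the whole nonempty interval $\frac{1-6\delta-\delta^2}{2(1-5\delta)}<\theta<\frac12$, and as $\theta\to\frac12$ the excess is $X^{\delta+\delta^2}$. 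That is a power of $X$, so neither an $X^{\varepsilon}$-loss nor rescaling $C$ absorbs it. Chebyshev then only gives a threshold larger than $CHX^{-\delta^2/2}$ by a power of $X$. Your crude bound discarded the factor $H^{-2\theta}$, but that is not the issue: the inequality fails even with it kept. So for $\theta$ just below $1/2$ the claim with $H=X^{6\delta}$ does not follow from the Proposition. The case split at $\theta=1/2$, in the statement and in the paper, should sit slightly below $1/2$.

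Your first suggested remedy is the right one, made precise. Either assume $2\theta(1-5\delta)\leq 1-6\delta-\delta^2$ (roughly $\delta\lesssim 1-2\theta$), or take $H=\max\{X^{6\delta},X^{1+\delta-(1-6\delta-2\delta^2)/(2\theta)}\}$. Your Step~1 computation never used $\theta\geq1/2$, so it applies verbatim to the second entry, and the maximum keeps $H\gg X^{6\delta}$.

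There are two smaller omissions, both shared with the paper's proof.
\begin{enumerate}[(i)]
\item Invoking the Proposition needs $X^{6\delta}\ll H\ll X^{1-6\delta}$. For $\theta\geq 1/2$ the upper bound forces $\delta\leq(7\theta+3+\sqrt{(7\theta+3)^2+2})^{-1}$, which should be recorded as a hypothesis; the paper imposes it only in Section~4.
\item Because of the implied constant in the Proposition, the proportion obtained is really $1-O(C^{-2})$, as you noted.
\end{enumerate}
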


\begin{proof}
We first prove that the sum of coefficients is small for a large proportion of $x$'s, going from a result on average (the second moment bound of Proposition 1) to a pointwise statement. This is a Chebyshev type argument and only depends on the upper bound of the second moment, as in \cite{jaasaari_signs_2022}.

By the upper bound obtained in the first proposition, we have 
\begin{equation}
\int_2^{2X} |\Sigma|^2dx \ll  H^2X^{1-\delta^2},
\end{equation}
where $H=X^{1+\delta -\frac{1}{2\theta} + \frac{6\delta + 2\delta^2}{2\theta}}$ if $\theta\geq 1/2$ and $H=X^{6\delta}$ if $0<\theta<1/2$.
Cutting the integral between $x$'s such that the sum inside is larger (resp. smaller) than $B$ (say for subsets $X_1$ and $X_2$ respectively), we obtain that the contribution of $X_1$ is
\begin{equation}
H^2 X^{1-\delta^2} \gg \int_{X_1} |\Sigma|^2 \gg |X_1| B^2
\end{equation}
and therefore $|X_1| \leqslant H^2 X^{1-\delta^2} B^{-2}$. Taking $B$ so that this is $\asymp X$, i.e. $B = CHX^{-\delta^2/2}$,  we obtain that $|X_2| < (1-C^{-2})X$. In other words, we have $|\Sigma| < CHX^{-\delta^2/2}$ for at least $(1-C^{-2})X$ of the $x \sim X$, i.e. at least a proportion $1-C^{-2}$ of the $x\sim X$.
\end{proof}

\section{Lower bound}
\label{sec:lower-bound}

We prove a setting aiming at mimicking \cite[Lemma 22]{jaasaari_signs_2022}, which is valid for any $L$-function and provides some information on short intervals from the information on the full sums.
\begin{prop}
\label{prop:lower-bound}
For all $H=o(X)$, we have
\begin{equation}
\sum_{\substack{km \in [x, x+H]\\ m \sim M \\ (k,m)=1}} |A(km)| > HX^{\kappa  -1-2\varepsilon}
\end{equation}
for $\gg X^{2\kappa  - 1 -3\varepsilon}$ of the $x \sim X$.
\end{prop}

\begin{rk}
    Interestingly, this does not require anything on $\kappa $. It is a general claim that a lower bound on the whole sum implies lower bound on short intervals, provided we have some control on the Rankin-Selberg bounds.
\end{rk}

Let's separate the cases depending on the size of $A(m)$ to quantify the contribution of \textit{small} coefficients $A(m)$. Let $\alpha > 0$ be a parameter, and split the sum according to the size of $|A(m)|$ compared to $X^\alpha$.

By the assumption \eqref{lower-bound-coeff}, we have that
$$
\sum_{\substack{km \sim X \\ m \sim M \\ (k,m)=1}} |A(km)| =\sum_{m\sim M}|A(m)|\sum_{\substack{k\sim X/m \\  (k,m)=1}} |A(k)|\gg X^{\kappa-\varepsilon}.
$$
When $|A(m)| > X^\alpha$, we can use the Rankin-Selberg bound \eqref{R-S bound} and obtain
\begin{equation}
\sum_{\substack{km \sim X \\ m \sim M \\ (k,m)=1\\ |A(km)| > X^\alpha}} |A(km)| \leqslant X^{-\alpha} \sum_{\substack{km \sim X \\ m \sim M \\ (k,m)=1 }} |A(km)|^2 \ll X^{-\alpha}\sum_{m\sim M}|A(m)|^2\sum_{\substack{k\sim X/m \\  (k,m)=1}} |A(k)|^2\ll  X^{1-\alpha+\varepsilon}.
\end{equation}
Therefore,
\begin{equation}
\sum_{\substack{km \sim X \\ m \sim M \\ (k,m)=1\\ |A(km)| \leqslant X^\alpha}} |A(km)| = \sum_{\substack{km \sim X \\ m \sim M \\ (k,m)=1}} |A(km)| - \sum_{\substack{km \sim X \\ m \sim M \\ (k,m)=1\\ |A(km)| > X^\alpha}} |A(km)| \gg X^{\kappa -\varepsilon} - X^{1-\alpha+\varepsilon}
\end{equation}
and this is of size $X^{\kappa-\varepsilon} $ as long as $\alpha > 1-\kappa +2\varepsilon$. We assume this condition from now on, i.e. take $\alpha = 1-\kappa  + 2\varepsilon$. This implies in particular  
\begin{align}
\label{bound}
X^{\kappa -\varepsilon}\ll \sum_{\substack{km \sim X \\ m \sim M \\ (k,m)=1\\ |A(km)| \leqslant X^\alpha}} |A(km)|&\ll \sum_{n=0}^{[X/H]+1}\frac{1}{H}\int_{X+(n-1)H}^{X+nH}\sum_{\substack{km \in [X+nH, X+(n+1)H]  \\ m \sim M \\ (k,m)=1\\ |A(km)| \leqslant X^\alpha}}|A(km)|dx\nonumber\\
&\leq \sum_{n=0}^{[X/H]+1}\frac{1}{H}\int_{X+(n-1)H}^{X+nH}\sum_{\substack{km \in [x, x+2H]  \\ m \sim M \\ (k,m)=1\\ |A(km)| \leqslant X^\alpha}}|A(km)|dx \nonumber\\
&\leq \frac{1}{H}\int_{X-H}^{2X+H}\sum_{\substack{km \in [x, x+2H]  \\ m \sim M \\ (k,m)=1\\ |A(km)| \leqslant X^\alpha}}|A(km)|dx.
\end{align}
Here we have used the simple observation that for any $x\in[X+(n-1)H,X+nH]$, we have $[X+nH, X+(n+1)H]\subset [x,x+2H]$ and so
$$
\sum_{\substack{km \in [X+nH, X+(n+1)H]  \\ m \sim M \\ (k,m)=1\\ |A(km)| \leqslant X^\alpha}}|A(km)|\leq\sum_{\substack{km \in [x, x+2H]  \\ m \sim M \\ (k,m)=1\\ |A(km)| \leqslant X^\alpha}}|A(km)|.
$$
Consider the ``bad" set $\mathcal{S}$ of $x\sim X$ not satisfying the claimed bound (called $B$, threshold to be optimized later), i.e.
\begin{equation}
\mathcal{S} : =\Bigg\{ x \sim X \ : \ \sum_{\substack{km \in [x, x+2H]  \\ m \sim M \\ (k,m)=1\\ |A(km)| \leqslant X^\alpha}} |A(mk) |\leqslant B \Bigg\}.
\end{equation}
The points in $\mathcal{S}$ contribute in the above integral at most as  (bounding $|\mathcal{S}|$ by $X$)
\begin{equation}
\frac{1}{H}\int_{\mathcal{S}} \sum_{\substack{km \in [x, x+2H]  \\ m \sim M \\ (k,m)=1\\ |A(km)| \leqslant X^\alpha}} |A(km)| \ll \frac{1}{H} |\mathcal{S}| B \ll \frac{1}{H} X B.
\end{equation}
This does not contribute enough to ensure the size of the integral \eqref{bound} if $H^{-1} XB \ll X^{\kappa-\varepsilon} $, i.e. as soon as $B \ll HX^{\kappa  -1-\varepsilon}$. 
We therefore choose this frontier threshold $B = HX^{\kappa  -1 - 2\varepsilon}$. In this case, the size of the integral has to be determined by the points in the complementary $^c\mathcal{S}$ of $\mathcal{S}$, which is the set of $x \sim X$ where the claim is true. On ${}^c\mathcal{S}$, using the fact that the sum is over small enough coefficients $|A(mk)| \leqslant X^\alpha$, the contribution of the integral is at most
\begin{equation}
X^\kappa  \ll \frac{1}{H}\int_{{}^c \mathcal{S}} \sum_{\substack{km \in [x, x+H] \\ m \sim M \\ (k,m)=1 \\ |A(m)| \leqslant X^\alpha}} |A(km)| \ll \frac{1}{H}|{}^c\mathcal{S}| HX^\alpha.
\end{equation}
This implies that 
\begin{equation}
|{}^c\mathcal{S}| \gg X^{\kappa  - \alpha} = X^{2\kappa  - 1 - 3\varepsilon}
\end{equation}
 since we chose $\alpha = 1-\kappa  + 2\varepsilon$. We obtain the desired statement.  \qed

\section{Proof of the theorem}
\label{sec:conclusion}

We finish the proof of Theorem \ref{thm} in this section.

For the proportion $\gg X^{2\kappa  - 1 - \varepsilon}$ of $x \sim X$ arising from Corollary \ref{coro1} and Proposition \ref{prop:lower-bound},  we can write the consequence of the the upper and the lower bound we obtained:  
\begin{equation}
\left|\sum A(mk)\right| < CHX^{-\delta^2/2} < HX^{\kappa  -1} < \sum |A(mk)|
\end{equation}
where the central inequality is always valid for $X$ large enough provided 
$$\delta > \sqrt{2-2\kappa }.$$



 This detects a change of sign in the $A(mk)$'s in the interval $[x, x+H]$, and this happens for $\gg X^{2\kappa-1-\varepsilon}$ of $x$'s (since Corollary \ref{coro1} and Proposition \ref{prop:lower-bound} are simultaneously valid for such a number of $x$'s). Hence, there are at least $\gg X^{2\kappa - 1 -\varepsilon}/H$ sign changes in $[X, 2X]$. 
 
 If $\theta>1/2$, we can take (for the upper bound claim)  $H=X^{1+\delta -\frac{1}{2\theta} + \frac{6\delta + 2\delta^2}{2\theta}}$, and we therefore obtain that there are at least $X^{2\kappa - 2 + 1/2\theta-\delta- \frac{6\delta + 2\delta^2}{2\theta}}$ changes of signs in $A(m)$ for $m \in [0, X]$.
  Since we required $X^{6\delta} \ll H \ll X^{1-6\delta}$, we require ($X^{6\delta} \ll H$ is true when $\delta\leq1/6$, which is assured by the following formula)
  \begin{equation}\label{bound for delta}
      1 +\delta-\frac{1}{2\theta} + \frac{6\delta + 2\delta^2}{2\theta}\leq 1-6\delta\Longrightarrow\delta\leq \frac{1}{7\theta+3+\sqrt{(7\theta+3)^2+2}}.
  \end{equation}
  Hence,
  $$
  \sqrt{2-2\kappa}<\delta\leq\frac{1}{7\theta+3+\sqrt{(7\theta+3)^2+2}}\Longrightarrow\kappa\geq1-\frac{1}{2(7\theta+3+\sqrt{(7\theta+3)^2+2})^2}.
  $$
  
  If $\theta=1/2$, $\kappa\geq0.9971$ and $\kappa\rightarrow1$ as $\theta$ increasing.
If we take $\delta=\varepsilon$ as in Theorem \ref{thm}, then $\kappa\geq 1-\varepsilon$.

If $\theta\leq1/2$, we required (for the upper bound claim) that $H=X^{6\delta}$, and we obtain that there are at least $X^{2\kappa - 1 -6\delta}$ changes of signs in $A(m)$ for $m \in [0, X]$. The result is nontrivial when $\delta<(2\kappa-1)/6$. Hence
  $$
    \sqrt{2-2\kappa}<\delta<(2\kappa-1)/6\Longrightarrow \kappa\geq-\frac{17}{2}+3\sqrt{10}\geq 0.986833.
  $$
{The proof of Theorem \ref{thm} is finished by taking $\delta=\sqrt{2-2\kappa+\varepsilon}$. }\qed

 
 \subsection{An application to $\GSp(4)$}
 \label{sec:application}

We apply Theorem \ref{thm} to Siegel modular forms on $\GSp(4)$. From \cite[Theorem 4.2]{pitale_weissauer}, which is a consequence of the deep results of Weissauer \cite[Theorem~3.3]{weissauer} on the functoriality and Langlands conjectures for $\GSp(4)$, a Siegel modular Hecke eigenform which is not a Saito–Kurokawa lift satisﬁes the generalized Ramanujan conjecture. This can be shown to be equivalent to $|\mu(p^r)|\leq 36p^{r(k-3/2)}$ for forms in the orthogonal complement of the Maass space, where the $\mu(m)$ are the Hecke eigenvalues (see  \cite[Theorem 4.2]{pitale}). Using the relation $A(p^r)=\frac{\mu(p)}{p^{k-3/2}}$ (see \cite[equation  (7)]{pitale_sign_2008}) where the $A(m)$ are the coefficients of the spinor $L$-function, we deduce $|A(p)| \leqslant 36$.

This implies by the remark following the definition of our class of $L$-functions that $\vartheta=0$ is admissible, and therefore that $\kappa  = 1$ holds. Therefore, we can deduce from Theorem \ref{thm} the Corollary \ref{coro} on Siegel modular forms.




\subsection*{Acknowledgements.} This work was done when D. L. was visiting Shenzhen University, which we thank for excellent working environment and support. D. L. acknowledges the support of the CDP C2EMPI, together with the French State under the France-2030 program, the University of Lille, the Initiative of Excellence of the University of Lille and the European Metropolis of Lille for their funding and support of the R-CDP-24-004-C2EMPI project. Y. W. is supported by National Natural Science Foundation of China (Grant No. 12371006).

\bibliographystyle{abbrv}

\end{document}